\documentclass[12pt,a4paper]{article}
\usepackage{amsfonts}
\usepackage{latexsym}
\usepackage{amsthm}
\usepackage{amsmath}
\usepackage{amssymb}
\usepackage{array}
\usepackage{enumerate}

\newtheorem{theorem}{Theorem}[section]
\newtheorem{corollary}[theorem]{Corollary}
\newtheorem{lemma}[theorem]{Lemma}
\newtheorem{proposition}[theorem]{Proposition}

\newtheorem{example}[theorem]{Example}

\setlength{\textwidth}{6.4in}
\setlength{\textheight}{9.8in}
\setlength{\topmargin}{-0.5in}
\setlength{\headheight}{0pt}
\setlength{\oddsidemargin}{0pt}
\setlength{\evensidemargin}{0pt}

\begin{document}

\title{\bf On the minimum degree of the power graph of a finite cyclic group}
\author{Ramesh Prasad Panda \and Kamal Lochan Patra \and Binod Kumar Sahoo}

\maketitle

\begin{abstract}
The power graph $\mathcal{P}(G)$ of a finite group $G$ is the simple undirected graph whose vertex set is $G$, in which two distinct vertices are adjacent if one of them is an integral power of the other. For an integer $n\geq 2$, let $C_n$ denote the cyclic group of order $n$ and let $r$ be the number of distinct prime divisors of $n$. The minimum degree $\delta(\mathcal{P}(C_n))$ of $\mathcal{P}(C_n)$ is known for $r\in\{1,2\}$, see \cite{PK-CA}. For $r\geq 3$, under certain conditions involving the prime divisors of $n$, we identify at most $r-1$ vertices such that $\delta(\mathcal{P}(C_n))$ is equal to the degree of at least one of these vertices. If $r=3$ or if $n$ is a product of distinct primes, we are able to identify two such vertices without any condition on the prime divisors of $n$.

\vspace{10pt}

\noindent {\bf Key words.} Power graph, Cyclic group, Minimum degree, Edge connectivity, Euler's totient function.

\vspace{5pt}

\noindent {\bf AMS subject classification.} 05C25, 05C07, 05C40
\end{abstract}

\section{Introduction}

Let $\Gamma$ be a simple graph with at least two vertices. The {\it edge connectivity} $\kappa'(\Gamma)$ of $\Gamma$ is the minimum number of edges whose deletion from $\Gamma$ gives a disconnected subgraph of $\Gamma$. The {\it vertex connectivity} $\kappa(\Gamma)$ of $\Gamma$ is the minimum number of vertices which need to be removed from $\Gamma$ so that the induced subgraph of $\Gamma$ on the remaining vertices is disconnected or has only one vertex. The latter case arises only when $\Gamma$ is a complete graph. The {\it minimum degree} of $\Gamma$, denoted by $\delta(\Gamma)$, is the minimum of the degrees of vertices of $\Gamma$. The study of vertex/edge connectivity is an interesting problem in graph theory.  It is known that $\kappa(\Gamma)\leq \kappa'(\Gamma)\leq \delta(\Gamma)$, and $\kappa'(\Gamma)=\delta(\Gamma)$ if the diameter of $\Gamma$ is at most $2$, see Theorem 4.1.9 and Exercise 4.1.25 in \cite{west}.

\subsection{Power graph}

The notion of directed power graph of a group was introduced in \cite{kel-2000}, which was further extended to semigroups in \cite{kel-2001, kel-2002}. Then the undirected power graph of a semigroup, in particular, of a group was defined in \cite{CGS-2009}. Many researchers have investigated both directed and undirected power graphs of groups from different view points. More on these graphs can be found in the survey paper \cite{AKC-2013} and the references therein.

Let $G$ be a finite group. The {\it power graph} of $G$, denoted by $\mathcal{P}(G)$, is the simple undirected graph with vertex set $G$, in which two distinct vertices are adjacent if one of them can be written as an integral power of the other. Since $G$ is finite, the identity element of $G$ is adjacent to all other vertices. So $\mathcal{P}(G)$ is a connected graph and its diameter is at most 2.

By \cite[Theorem 2.12]{CGS-2009}, $\mathcal{P}(G)$ is a complete graph if and only if $G$ is a cyclic group of prime power order. It is proved in \cite[Theorem 1.3]{cur-2014} and \cite[Corollary 3.4]{cur-2016} respectively that, among all finite groups of a given order, the cyclic group of that order has the maximum number of edges and has the largest clique in its power graph. By \cite[Theorem 5]{dooser} and \cite[Corollary 2.5]{FMW}, the power graph of a finite group is perfect, in particular, the clique number and the chromatic number coincide. Explicit formula for the clique number of the power graph of a finite cyclic group is given in \cite[Theorem 2]{mir} and \cite[Theorem 7]{dooser}. The full automorphism group of the power graph of a finite group is described in \cite[Theorem 2.2]{FMW-16}.

For a positive integer $n$, let $C_n$ denote the finite cyclic group of order $n$. The vertex connectivity of $\mathcal{P}(C_n)$ is studied in \cite{CP-ADM, CPS, CPS-1, PK-JAA} and the exact value of $\kappa(\mathcal{P}(C_n))$ is obtained in the following cases: (i) $n$ is a product of distinct primes, (ii) $n$ is divisible by at most three distinct primes, (iii) $n$ is divisible by the square of its largest prime factor, and (iv) the smallest prime divisor of $n$ is greater than or equal to the number of distinct prime divisors of $n$. The above articles also provide some sharp upper bounds for $\kappa(\mathcal{P}(C_n))$. It is proved in \cite[Theorem 6.7]{PK-CA} that the vertex connectivity and the minimum degree of $\mathcal{P}(C_n)$ coincide if and only if either $n$ is a prime power or $n$ is twice of an odd prime power. For these values of $n$, the relation $\kappa(\mathcal{P}(C_n))\leq \kappa'(\mathcal{P}(C_n))\leq \delta(\mathcal{P}(C_n))$ implies that the vertex connectivity and the edge connectivity of $\mathcal{P}(C_n)$ are equal. Since the diameter of $\mathcal{P}(C_n)$ is at most $2$, the edge connectivity and the minimum degree of $\mathcal{P}(C_n)$ coincide for every $n$. Thus, in order to determine the edge connectivity of $\mathcal{P}(C_n)$, it is enough to find the minimum degree of $\mathcal{P}(C_n)$.

\subsection{Minimum degree of $\mathcal{P}(C_n)$}

Throughout the paper, we shall identify $C_n$ with $\mathbb{Z}_n=\{0,1,2,\ldots,n-1\}$, the group of integers modulo $n$. The degree of a vertex $a\in C_n$ is denoted by $\deg(a)$. By \cite[Lemma 3.4]{MRS-JAA} (also see \cite[Lemma 2.7]{cur-2014}), we have the following formula for $\deg(a)$:
\begin{equation}\label{eqn-2}
\deg({a})=\frac{n}{b} +\underset{d|b,\; d\neq b}\sum \phi \left( \frac{n}{d} \right) -1=\frac{n}{b} +\underset{d|b}\sum \phi \left( \frac{n}{d} \right) - \phi \left( \frac{n}{b} \right) -1,
\end{equation}
where $\phi$ is the Euler's totient function and $b$ is the greatest common divisor of $a$ and $n$. If $a=0$ or $a$ is a generator of $C_n$, then $\deg(a)=n-1$.

To determine $\delta(\mathcal{P}(C_n))$, our objective will be to identify a vertex of $\mathcal{P}(C_n)$ having minimum degree and then the degree of that vertex can be calculated using (\ref{eqn-2}). The formula (\ref{eqn-2}) implies that $\deg(a)=\deg(b)$. Thus the degree of a given non-zero vertex of $\mathcal{P}(C_n)$ is equal to the degree of some element of $C_n$ which is a divisor of $n$. Therefore, in order to identify a vertex of $\mathcal{P}(C_n)$ of minimum degree, we need to compare the degrees of all possible vertices which are divisors of $n$.

If $n$ is a prime power, then $\mathcal{P}(C_n)$ is a complete graph and so $\delta(\mathcal{P}(C_n))=n-1=\deg(a)$ for every vertex $a\in C_n$. If $n>1$ is not a prime power, then $\mathcal{P}(C_n)$ is not a complete graph and so $\delta(\mathcal{P}(C_n))<n-1$. Hence the minimum degree of $\mathcal{P}(C_n)$ will be equal to the degree of a vertex which is a proper\footnote{A positive integer $a$ is called a {\it proper divisor} of $n$ if $a$ divides $n$ and $a\notin\{1,n\}$.} divisor of $n$. For certain values of $n$, a vertex of $\mathcal{P}(C_n)$ of minimum degree was obtained in \cite[Theorem 4.6]{PK-CA} which we mention below.

\begin{proposition}\cite{PK-CA}\label{mdv}
Let $p_1,p_2,p_3,p_4$ be prime numbers with $p_1<p_2<p_3<p_4$. Then the following hold:
\begin{enumerate}[\rm(i)]
\item If $n=p_1^{\alpha_1} p_2^{\alpha_2}$ for some positive integers $\alpha_1, \alpha_2$, then $\delta(\mathcal{P}(C_n)) = \deg\left({p_2^{\alpha_2}}\right)$.
\item If $n=p_1 p_2 p_3$, then $\delta(\mathcal{P}(C_n)) = \deg({p_3})$.
\item Let $n = p_1 p_2 p_3p_4$. If $n$ is odd or $p_4 \geq p_3 + \displaystyle \frac{2(p_3-1)}{p_2-1}$, then $\delta(\mathcal{P}(C_n)) = \deg({p_4})$, otherwise, $\delta(\mathcal{P}(C_n)) = \deg({p_3p_4})$.
\end{enumerate}
\end{proposition}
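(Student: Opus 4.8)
The plan is to use the fact, noted above, that $\deg(a)=\deg(\gcd(a,n))$, so that $\delta(\mathcal{P}(C_n))$ equals the smallest value of $\deg(b)$ over the \emph{proper} divisors $b$ of $n$. I set $f(b):=\deg(b)+1=\frac nb+\sum_{d\mid b,\ d\neq b}\phi(n/d)$. Expanding $\sum_{d\mid b}\phi(n/d)$ as a product over the primes dividing $n$ and using $\sum_{k=0}^{m}\phi(p^{k})=p^{m}$ gives a closed form for $f(b)$; when $n$ is squarefree this collapses to $f(b)=\frac nb+(b-1)\,\phi(n/b)$. With these formulas I would stratify the proper divisors of $n$ and compare across the strata.

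For part (i), write $n=p_1^{\alpha_1}p_2^{\alpha_2}$ and $b=p_1^{i}p_2^{j}$, and split the proper divisors into the \emph{$p_1$-saturated} ones ($i=\alpha_1$, $j<\alpha_2$), the \emph{$p_2$-saturated} ones ($j=\alpha_2$, $i<\alpha_1$), and the \emph{interior} ones ($i<\alpha_1$, $j<\alpha_2$). On the first two families the closed form gives $f(p_1^{\alpha_1}p_2^{j})=n-p_2^{\alpha_2-j-1}(p_1^{\alpha_1}-1)$ and $f(p_1^{i}p_2^{\alpha_2})=n-p_1^{\alpha_1-i-1}(p_2^{\alpha_2}-1)$, each minimized at the smallest exponent, so the only candidates from these families are $p_1^{\alpha_1}$ and $p_2^{\alpha_2}$; since $p_1<p_2$, an elementary inequality yields $f(p_2^{\alpha_2})\le f(p_1^{\alpha_1})$. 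For the interior family a direct computation gives $f(p_1^{i}p_2^{j})=n-\Psi(x,y)$ with $\Psi(x,y)=p_1^{\alpha_1}y+p_2^{\alpha_2}x-(p_1+p_2)xy$, where $x=p_1^{\alpha_1-i-1}\in\{1,\dots,p_1^{\alpha_1-1}\}$ and $y=p_2^{\alpha_2-j-1}\in\{1,\dots,p_2^{\alpha_2-1}\}$. As $\Psi$ is affine in each variable, its maximum over the box $[1,p_1^{\alpha_1-1}]\times[1,p_2^{\alpha_2-1}]$ is attained at a corner, and checking the four corner values shows each is at most $p_1^{\alpha_1-1}(p_2^{\alpha_2}-1)$; hence $f(b)\ge f(p_2^{\alpha_2})$ on the interior family too, and $\delta(\mathcal{P}(C_n))=\deg(p_2^{\alpha_2})$.

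For parts (ii) and (iii), $n$ is squarefree, and I would stratify by the number of prime factors of $b$, using $f(b)=\frac nb+(b-1)\phi(n/b)$: a one-prime divisor has $f(p_i)=\frac n{p_i}+\phi(n)$, least at the largest prime; a divisor $b$ whose complement $n/b$ is a prime $p$ has $f(b)=n-\frac np+1$, least when $p=p_1$; and (relevant only when $n$ has at least four prime factors) a divisor with $n/b=p_kp_l$ has $f(b)=(p_k+p_l-1)+n(1-1/p_k)(1-1/p_l)$, which increases in each of $p_k,p_l$, so it is least when $\{p_k,p_l\}=\{p_1,p_2\}$, i.e.\ $b=p_3p_4$. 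For $n=p_1p_2p_3$ this leaves $f(p_3)$ and $f(p_2p_3)$ as candidates, and $f(p_2p_3)-f(p_3)=1+(p_1-1)(p_2+p_3-1)-p_1p_2>0$, so $\delta(\mathcal{P}(C_n))=\deg(p_3)$. For $n=p_1p_2p_3p_4$ the candidates are $f(p_4)$, $f(p_3p_4)$, $f(p_2p_3p_4)$; one checks $f(p_2p_3p_4)>f(p_4)$ unconditionally, reducing the minimum to $\min\{f(p_4),f(p_3p_4)\}$, where
\[
f(p_4)-f(p_3p_4)=p_1p_2(p_3-1)-(p_1-1)(p_2-1)(p_3+p_4-2).
\]
If $n$ is odd (so $p_1\ge3$) this is negative, because $p_3+p_4-2>2(p_3-1)$ and $2(p_1-1)(p_2-1)>p_1p_2$, giving $\delta(\mathcal{P}(C_n))=\deg(p_4)$. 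If $n$ is even (so $p_1=2$), the right-hand side is nonpositive exactly when $p_3+p_4-2\ge\frac{2p_2(p_3-1)}{p_2-1}$, which rearranges — via $\frac{2p_2(p_3-1)}{p_2-1}=2(p_3-1)+\frac{2(p_3-1)}{p_2-1}$ — to $p_4\ge p_3+\frac{2(p_3-1)}{p_2-1}$; so the minimum is $\deg(p_4)$ under the stated condition and $\deg(p_3p_4)$ when it fails, once one also checks that in the failing case $f(p_3p_4)$ lies below the $f$-value of every one-, two- and three-prime divisor.

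The computations are elementary but numerous, and the main care is needed in two places: in part (i), confirming that the affine maximum of $\Psi$ over the box does not exceed $p_1^{\alpha_1-1}(p_2^{\alpha_2}-1)$, which comes down to four corner inequalities; and in part (iii), evaluating $f(p_4)-f(p_3p_4)$ exactly, recognising that the stated bound on $p_4$ is precisely its sign change when $p_1=2$, and verifying that in the regime where $p_3p_4$ wins it is the \emph{global} minimiser rather than merely better than $p_4$.
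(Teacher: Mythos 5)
Your proposal is correct, but it takes a genuinely different route from the paper. The paper does not prove Proposition \ref{mdv} from first principles: it quotes it from \cite{PK-CA}, and in the Remark subsection it rederives it as a specialization of its main results --- part (i) from Theorem \ref{thm.mindeg} after checking that one of its hypotheses always holds for $r=2$ (via Lemma \ref{prime.ineq}(ii)), and parts (ii), (iii) from Theorem \ref{mindeg.main} by translating the criterion $\phi(p_r)\geq\left(\frac{p_1\cdots p_{r-2}}{\phi(p_1\cdots p_{r-2})}-1\right)\phi(p_{r-1})$, which for $p_1=2$ becomes exactly $p_4\geq p_3+\frac{2(p_3-1)}{p_2-1}$; those theorems in turn rest on the reduction machinery of Lemmas \ref{pp1}--\ref{higher-lower} and Proposition \ref{prop1}. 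You instead argue directly: using the multiplicativity of $\sum_{d\mid b}\phi(n/d)$ (the content of Lemma \ref{phi.sum}) you get a closed form for $f(b)=\deg(b)+1$, reduce to $f(b)=\frac{n}{b}+(b-1)\phi(n/b)$ in the squarefree case, and compare divisors stratum by stratum. I checked your key computations and they are right: the boundary formulas and the bilinear function $\Psi(x,y)=p_1^{\alpha_1}y+p_2^{\alpha_2}x-(p_1+p_2)xy$ in part (i), whose maximum over the box is indeed at a corner and whose four corner values are all at most $p_1^{\alpha_1-1}(p_2^{\alpha_2}-1)$; the identity $f(p_2p_3)-f(p_3)=1+(p_1-1)(p_2+p_3-1)-p_1p_2>0$ in part (ii); and in part (iii) the identity $f(p_4)-f(p_3p_4)=p_1p_2(p_3-1)-(p_1-1)(p_2-1)(p_3+p_4-2)$, whose sign analysis (negative for $p_1\geq 3$, and for $p_1=2$ nonpositive exactly when $p_4\geq p_3+\frac{2(p_3-1)}{p_2-1}$) reproduces the stated threshold, together with the unconditional inequality $f(p_2p_3p_4)>f(p_4)$. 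Your approach buys a self-contained elementary verification with exact difference formulas; the paper's derivation buys the proposition as a special case of statements valid for all $r$ and all exponents, at the cost of the general lemmas. One small simplification: the last item on your checklist --- that $p_3p_4$ is the \emph{global} minimizer when the inequality on $p_4$ fails --- is already automatic from your stratification, since $f(p_3p_4)$ minimizes the two-prime stratum and in that regime lies below $f(p_4)$, which minimizes the one-prime stratum and is itself below $f(p_2p_3p_4)$, the minimum of the three-prime stratum.
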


In this paper, we generalize the results stated in Proposition \ref{mdv} to several other values of $n$. In view of Proposition \ref{mdv}(i), if necessary, we may assume that $n$ is divisible by at least three distinct prime numbers.

The following theorem is proved in Section \ref{distinct-primes} for the minimum degree of $\mathcal{P}(C_n)$ when $n$ is a product of distinct prime numbers.

\begin{theorem}\label{mindeg.main}
Let $n=p_1p_2\cdots p_r$, where $r \geq 3$ and $p_1,p_2,\ldots, p_r$ are prime numbers with $p_1<p_2<\cdots <p_r$. Then
$$\delta(\mathcal{P}(C_n)) = \min\{\deg(p_{r-1}p_r), \deg(p_r)\}.$$
Further, $\delta(\mathcal{P}(C_n)) = \deg(p_r)$ if and only if $\phi(p_r) \geq \left( \dfrac{p_1p_2\cdots p_{r-2}}{\phi(p_1p_2\cdots p_{r-2})} - 1\right) \phi(p_{r-1})$.
In particular, if $\phi(p_{r}) \geq (r-2) \phi(p_{r-1})$, then $\delta(\mathcal{P}(C_n)) = \deg({p_r})$.
\end{theorem}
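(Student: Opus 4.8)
The plan is to determine the minimum degree by comparing the degrees of all proper divisors of $n=p_1p_2\cdots p_r$. Since $n$ is squarefree, a proper divisor corresponds to a proper nonempty subset $S\subsetneq\{1,2,\ldots,r\}$ via $d_S=\prod_{i\in S}p_i$, and by the remark after \eqref{eqn-2}, $\delta(\mathcal{P}(C_n))$ is the minimum of $\deg(d_S)$ over all such $S$. The first step is to rewrite \eqref{eqn-2} in a form adapted to the squarefree case. For $d\mid n$ with $d=\prod_{i\in T}p_i$, we have $\phi(n/d)=\prod_{i\notin T}(p_i-1)\cdot\prod_{i\in T,\,\text{none}}\!1 = \prod_{i\notin T}(p_i-1)$; summing $\phi(n/d)$ over all divisors $d$ of $d_S$ (i.e. over $T\subseteq S$) telescopes to $\prod_{i\notin S}(p_i-1)\cdot\prod_{i\in S}p_i = \frac{n}{\prod_{i\in S}(p_i-1)}\cdot\prod_{i\in S}1$, giving a clean product expression. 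This reduces $\deg(d_S)$ to $\frac{n}{d_S}+\frac{n}{\prod_{i\in S}(p_i-1)}\cdot\big(\text{something}\big)-\phi(n/d_S)-1$; I would record the simplified closed form and then argue monotonicity.

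The second step is a monotonicity/comparison argument showing that to minimize $\deg(d_S)$ one should take $S$ to consist of the \emph{largest} primes. The heuristic is that $\deg(d_S)$ is dominated by the term $n/d_S$ (which shrinks when $d_S$ contains large primes) competing against the summation term (which grows with $|S|$ and with the primes chosen). I would prove: (a) among subsets of a fixed size $k$, the minimum of $\deg$ is attained at $S=\{r-k+1,\ldots,r\}$ — this should follow from a swap lemma showing $\deg(d_{S\cup\{j\}}\setminus\{i\})\le \deg(d_S)$ when $i<j$, $i\in S$, $j\notin S$, by comparing the two sides term-by-term using $\phi(p_i)=p_i-1<p_j-1=\phi(p_j)$; and (b) consequently the global minimum is attained at one of the "top-$k$" divisors $d_k:=p_{r-k+1}\cdots p_r$ for $k=1,\ldots,r-1$. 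The harder part of this step is then showing that among $d_1,d_2,\ldots,d_{r-1}$ the minimum degree is achieved at $k=1$ or $k=2$; I expect to show $\deg(d_1)\ge\deg(d_2)\le\deg(d_3)\le\cdots$ or that the sequence $\deg(d_k)$ for $k\ge 2$ is nondecreasing, by analyzing $\deg(d_{k+1})-\deg(d_k)$ and checking its sign for $k\ge 2$; this is where the bulk of the estimation lies, likely requiring the inequality $p_{r-k}\cdots$ versus the telescoped product.

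The third step is to make the dichotomy between $\deg(p_r)$ and $\deg(p_{r-1}p_r)$ explicit. Setting $\deg(d_1)\le\deg(d_2)$, substituting the closed forms from Step 1, and simplifying (canceling the common $\phi(n/d)$-type terms and using $n/p_r - n/(p_{r-1}p_r) = \frac{n}{p_{r-1}p_r}(p_{r-1}-1)$) should reduce exactly to the stated criterion $\phi(p_r)\ge\big(\frac{p_1\cdots p_{r-2}}{\phi(p_1\cdots p_{r-2})}-1\big)\phi(p_{r-1})$. Finally, for the "in particular" clause, I would use the elementary bound $\frac{p_1\cdots p_{r-2}}{\phi(p_1\cdots p_{r-2})}=\prod_{i=1}^{r-2}\frac{p_i}{p_i-1}\le\prod_{i=1}^{r-2}\frac{p_i}{p_i-1}$, and since the $p_i$ are the $r-2$ smallest primes among the $p_j$'s, bound each factor: in the worst case $p_i\ge 2,3,5,\ldots$ gives $\prod\frac{p_i}{p_i-1}\le$ a quantity at most $r-1$, so $\frac{p_1\cdots p_{r-2}}{\phi(p_1\cdots p_{r-2})}-1\le r-2$, whence $\phi(p_r)\ge(r-2)\phi(p_{r-1})$ suffices. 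The main obstacle I anticipate is Step 2(b) — establishing that no "middle" divisor $d_k$ with $3\le k\le r-1$ can beat both $d_1$ and $d_2$ — since this requires a careful sign analysis of consecutive differences rather than a single clean inequality; the swap lemma in 2(a) and the algebra in Step 3 should be routine by comparison.
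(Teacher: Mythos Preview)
Your plan matches the paper's proof closely. The swap argument in Step~2(a) is the paper's Lemma~2.4 (packaged as Lemma~3.1, giving $\delta(\mathcal{P}(C_n))=\min_{2\le s\le r}\deg(p_s\cdots p_r)$), and your Step~3 is exactly the paper's Lemma~3.2 specialized to $s=r$; the ``in particular'' clause is handled just as you describe, via the bound $(r-1)\phi(p_1\cdots p_{r-2})\ge p_1\cdots p_{r-2}$ from Lemma~2.1(i).

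The one tactical difference is in Step~2(b). Rather than checking the sign of each consecutive difference $\deg(d_{k+1})-\deg(d_k)$ for $k\ge 2$ separately, the paper proves a \emph{propagation} lemma (Lemma~3.3): if $\deg(p_{s-1}\cdots p_r)\ge\deg(p_s\cdots p_r)$ then automatically $\deg(p_{s-2}\cdots p_r)>\deg(p_{s-1}\cdots p_r)$. It then verifies only the single base case $\deg(p_{r-2}p_{r-1}p_r)>\deg(p_{r-1}p_r)$ (via Lemma~2.1(i)) and cascades upward. Your direct approach also works --- each required inequality reduces, after Lemma~2.1(i), to $p_s\cdots p_r>(s-2)\phi(p_{s-1})+1$, and since $s\le r-1$ the left side exceeds $p_{s-1}^2$, which dominates the right side because $p_{s-1}>s-3$ --- so this is a matter of organization rather than a gap. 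The paper's propagation device is a bit slicker (one implication plus one base case instead of a family of estimates), while your version keeps everything at the level of explicit inequalities. One small caution: your tentative ``$\deg(d_1)\ge\deg(d_2)\le\deg(d_3)\le\cdots$'' is not a universal pattern (the first inequality can go either way), but your alternative ``$\deg(d_k)$ nondecreasing for $k\ge 2$'' is exactly what holds and what the paper establishes.
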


For general $n$, under certain conditions involving its prime divisors, the following theorem is proved in Section \ref{general-n} on the minimum degree of $\mathcal{P}(C_n)$.

\begin{theorem}\label{thm.mindeg}
Let $n=p_1^{\alpha_1}p_2^{\alpha_2} \cdots p_r^{\alpha_r}$, where $r \geq 2$, $\alpha_1,\alpha_2,\ldots, \alpha_r$ are positive integers and $p_1,p_2,\ldots, p_r$ are prime numbers with $p_1<p_2<\cdots <p_r$. Suppose that any of the following two conditions holds:
\begin{enumerate}[\rm(i)]
	\item $2\phi(p_{1}p_2 \cdots p_{r}) \geq p_1p_2\cdots p_r$,
	\item $\phi(p_{i+1}) \geq r \phi(p_i)$ for each $i\in\{1,2,\ldots, r-1\}$.
\end{enumerate}
If $t\in\{2,3,\ldots, r\}$ is the largest integer such that $\alpha_t\geq \alpha_j$ for $2\leq j\leq r$, then
$$\delta(\mathcal{P}(C_n)) = \min\{\deg\left(p_s^{\alpha_s}\right):t \leq s \leq r\}.$$
\end{theorem}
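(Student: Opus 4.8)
The plan is to combine the degree formula~(\ref{eqn-2}) with a reduction showing that $\delta(\mathcal{P}(C_n))$ is attained at a divisor of $n$ which is a prime power. As recorded after~(\ref{eqn-2}), $\delta(\mathcal{P}(C_n))=\min\{\deg(d):d\mid n,\ 1<d<n\}$, and from~(\ref{eqn-2}) a short computation gives, for a divisor $p_i^{\beta}$ with $1\le\beta\le\alpha_i$, the identity $\deg(p_i^{\beta})+1=p_i^{\alpha_i-\beta}\bigl(\tfrac{n}{p_i^{\alpha_i}}+\phi(\tfrac{n}{p_i^{\alpha_i}})(p_i^{\beta}-1)\bigr)$; in particular
\[
\deg\!\left(p_i^{\alpha_i}\right)=\frac{n}{p_i^{\alpha_i}}+\phi(n)\left(1+\sum_{k=1}^{\alpha_i-1}p_i^{-k}\right)-1,
\]
whose inner factor is increasing in $\alpha_i$ and decreasing in $p_i$. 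I would run the argument in three steps.

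\emph{Step 1 (reduction to prime powers).} I would prove that, under either hypothesis, every proper divisor $d$ of $n$ satisfies $\deg(d)\ge\deg(p_j^{\alpha_j})$, where $p_j$ is the largest prime dividing $d$; this is done by induction on the number of distinct primes dividing $d$. The base case $d=p_i^{\beta}$ follows from the identity above: one gets $\deg(p_i^{\beta})-\deg(p_i^{\beta+1})=p_i^{\alpha_i-\beta-1}(p_i-1)\bigl(\tfrac{n}{p_i^{\alpha_i}}-\phi(\tfrac{n}{p_i^{\alpha_i}})\bigr)\ge0$, so raising the exponent of $p_i$ to $\alpha_i$ does not increase the degree, with no hypothesis needed. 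For the inductive step, write $d=p_i^{\beta}d'$ where $p_i$ is the \emph{smallest} prime dividing $d$ and $p_i\nmid d'$; expanding $\deg(d)$ and $\deg(d')$ via~(\ref{eqn-2}) and simplifying, one finds $\deg(d)-\deg(d')$ equal to a nonnegative multiple of $A-N$ (plus, when $\beta=\alpha_i$, a further nonnegative term $\tfrac{n}{p_i^{\alpha_i}d'}-\phi(\tfrac{n}{p_i^{\alpha_i}d'})$), where $N=\tfrac{n}{p_i^{\alpha_i}d'}$ and $A=\sum_{e\mid d'}\phi\!\bigl(\tfrac{n}{p_i^{\alpha_i}e}\bigr)$. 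Hence it suffices to show $A\ge N$, i.e.\ that deleting the smallest prime of $d$ does not raise the degree. Since $A\ge\phi(n/p_i^{\alpha_i})$, $N\le\tfrac13(n/p_i^{\alpha_i})$, and $\phi(n/p_i^{\alpha_i})/(n/p_i^{\alpha_i})\ge\phi(n)/n$, it is enough that $\phi(n)/n\ge\tfrac13$: hypothesis~(i) gives even $\phi(n)/n\ge\tfrac12$ (and forces $p_1\ge3$, so that $n$ is odd, when $r\ge3$), while under~(ii) the rapid growth of the $p_k$ yields the required lower bound on $\prod_{k\ne i}(1-1/p_k)$. As $d'$ has one fewer prime divisor and the same largest prime divisor as $d$, the induction closes; descending to a single prime and applying the base case proves the claim, and hence $\delta(\mathcal{P}(C_n))=\min\{\deg(p_j^{\alpha_j}):1\le j\le r\}$.

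\emph{Step 2 (comparing the prime powers).} For $j\ne t$ put $L=n/(p_j^{\alpha_j}p_t^{\alpha_t})$; then $L$ is coprime to $p_j$ and to $p_t$, and a manipulation of the identities above gives
\[
\deg\!\left(p_j^{\alpha_j}\right)-\deg\!\left(p_t^{\alpha_t}\right)=(L-\phi(L))\bigl(p_t^{\alpha_t}-p_j^{\alpha_j}\bigr)+\phi(L)\left(\frac{p_j^{\alpha_j}(p_t^{\alpha_t}-1)}{p_j}-\frac{p_t^{\alpha_t}(p_j^{\alpha_j}-1)}{p_t}\right).
\]
For $2\le j\le t-1$, maximality of $\alpha_t$ gives $\alpha_j\le\alpha_t$, hence $p_j^{\alpha_j}\le p_t^{\alpha_t}$; together with $p_j<p_t$ this makes both summands on the right nonnegative, so $\deg(p_j^{\alpha_j})\ge\deg(p_t^{\alpha_t})$ with no hypothesis. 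The case $j=1$ is the delicate one, because $\alpha_1$ is unconstrained: when $p_1^{\alpha_1}>p_t^{\alpha_t}$ the first summand is negative, and one must check that the positive second summand --- which simplifies to $\tfrac{\phi(n)}{(p_1-1)(p_t-1)}\bigl(p_t-p_1-p_t^{1-\alpha_t}+p_1^{1-\alpha_1}\bigr)$ --- dominates it; this is precisely where hypothesis~(i) (through $n/\phi(n)\le2$ and the consequent lower bounds on $p_1$ and on the spacing of the primes) or hypothesis~(ii) is invoked. Combining, $\min\{\deg(p_j^{\alpha_j}):1\le j\le r\}=\min\{\deg(p_s^{\alpha_s}):t\le s\le r\}$.

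\emph{Step 3 (conclusion)} is then immediate: each $p_s^{\alpha_s}$ with $t\le s\le r$ is a proper divisor of $n$ since $r\ge2$, so $\delta(\mathcal{P}(C_n))\le\min\{\deg(p_s^{\alpha_s}):t\le s\le r\}$, and Steps~1 and~2 give the reverse inequality. (When $r=2$, $t=2$ and this recovers Proposition~\ref{mdv}(i).) I expect the main obstacle to be the inductive step of Step~1 --- establishing $A\ge N$, i.e.\ that removing the smallest prime factor never raises the degree --- together with the matching $j=1$ comparison in Step~2: these are exactly the points at which hypotheses~(i) and~(ii) are used, and the real work is to obtain estimates for $A$ and for $L-\phi(L)$ that are uniform in the exponents $\alpha_1,\dots,\alpha_r$ (in particular robust against a very large $\alpha_1$); everything else is manipulation of~(\ref{eqn-2}).
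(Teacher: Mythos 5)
Your Step 1 contains the critical gap, and it sits exactly where the theorem's real work lies. Writing $d=p_i^{\beta}d'$ with $p_i\nmid d'$, $n=p_i^{\alpha_i}M$, $N=M/d'$ and $A=\sum_{e\mid d'}\phi\bigl(\tfrac{n}{p_i^{\alpha_i}e}\bigr)$, the formula (\ref{eqn-2}) gives
\[
\deg(d)-\deg(d')=\bigl(p_i^{\alpha_i-1}-p_i^{\alpha_i-\beta-1}\bigr)\Bigl[A+(p_i-1)\phi(N)-p_iN\Bigr]
\]
for $\beta<\alpha_i$, and the same bracket times $p_i^{\alpha_i-1}$ plus $N-\phi(N)$ when $\beta=\alpha_i$. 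The bracket is $A-N-(p_i-1)\bigl(N-\phi(N)\bigr)$, not $A-N$: your claimed identity omits the nonpositive correction $(p_i-1)(\phi(N)-N)$. (Check $n=30$, $d=6$, $d'=3$: $\deg(6)-\deg(3)=7$, while your expression gives $A-N+(N-\phi(N))=8$.) Consequently ``it suffices to show $A\ge N$'' is false, and the estimates you then supply ($A\ge\phi(n/p_i^{\alpha_i})$, $N\le\tfrac13 n/p_i^{\alpha_i}$, $\phi(n)/n\ge\tfrac13$) are aimed at the wrong target: what is actually needed is $A\ge p_iN-(p_i-1)\phi(N)$, and establishing that under hypotheses (i) and especially (ii) (where $\phi(n)/n$ can equal $\tfrac13$, e.g.\ $n=2^a3^b$) requires the finer lower bound on the divisor sum that the paper extracts in Proposition \ref{prop1} via Lemma \ref{phi.sum} and Lemma \ref{prime.ineq}(i) — there the hypotheses enter through $\phi(p_{k_s})+\phi(p_{k_i})\ge 2\phi(p_{k_i})$ resp.\ $\ge(r+1)\phi(p_{k_i})$, not through a crude $\phi(n)/n$ bound. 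Under hypothesis (i) your scheme can be patched (use $A\ge\phi(M)\ge\tfrac{\phi(n)}{n}d'N$, $d'\ge p_i+1$ and $\phi(N)\ge N/2$), but under (ii) the single-term bound $A\ge\phi(M)$ is not enough, so the central reduction to prime powers is not established as written. Note also that the paper's Proposition \ref{prop1} strips one prime at a time (keeping the largest prime of $m$), whereas you strip the whole power of the smallest prime at once; your statement is a consequence of the paper's, but it does not come for free from the inequality you verify.

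Your Step 2 identity is correct (I checked it reduces to $\deg(p_j^{\alpha_j})=\tfrac{n}{p_j^{\alpha_j}}+(p_j^{\alpha_j}-1)\phi(n/p_j^{\alpha_j})-1$), and the case $2\le j\le t-1$ is fine and hypothesis-free, matching Proposition \ref{degcompare}(ii),(iii). But the $j=1$ case is left as an assertion (``this is precisely where the hypotheses are invoked''), which is another unproved step in your write-up; it is also unnecessary: you do not need $\deg(p_1^{\alpha_1})\ge\deg(p_t^{\alpha_t})$, only $\deg(p_1^{\alpha_1})\ge\min\{\deg(p_s^{\alpha_s}):t\le s\le r\}$, and this follows unconditionally from Proposition \ref{degcompare}(i), which gives $\deg(p_1^{\alpha_1})>\deg(p_r^{\alpha_r})$ — exactly how the paper disposes of $p_1$. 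So: Step 2 is repairable by quoting the known inequality, Step 3 is fine, but Step 1 has a genuine error (the identity) whose repair amounts to redoing the paper's Proposition \ref{prop1}.
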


As an application of Theorem \ref{thm.mindeg}, we prove the following corollary in Section \ref{general-n} which can be used to determine $\delta(\mathcal{P}(C_n))$ for many values of $n$.

\begin{corollary}\label{exact.mindeg}
Let $n=p_1^{\alpha_1}p_2^{\alpha_2} \cdots p_r^{\alpha_r}$, where $r \geq 2$, $\alpha_1,\alpha_2,\ldots, \alpha_r$ are positive integers and $p_1<p_2<\cdots <p_r$ are prime numbers. Suppose that any of the following two conditions holds:
\begin{enumerate}[\rm(i)]
\item $p_1\geq  r+1$ and $p_r > rp_{r-1}$,	
\item $p_{i+1} > rp_{i}$ for each $i\in\{1,2,\ldots, r-1\}$.	
\end{enumerate}
Then $\delta(\mathcal{P}(C_n)) = \deg(p_r^{\alpha_r})$.
\end{corollary}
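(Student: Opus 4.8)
The plan is to deduce Corollary \ref{exact.mindeg} from Theorem \ref{thm.mindeg} in two steps: first verify that each of the hypotheses (i), (ii) of the corollary implies the corresponding hypothesis of Theorem \ref{thm.mindeg} together with the auxiliary bound $\phi(n)\geq n/r$; then show that among the prime powers $p_s^{\alpha_s}$, $t\leq s\leq r$, furnished by that theorem, the degree is least at $s=r$.

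For the first step: if $p_1\geq r+1$, then $p_i\geq r+i$ for $1\leq i\leq r$ (distinct increasing primes), so
\[
\frac{n}{\phi(n)}=\prod_{i=1}^{r}\frac{p_i}{p_i-1}\leq\prod_{i=1}^{r}\frac{r+i}{r+i-1}=\frac{2r}{r}=2,
\]
which, since $n/\phi(n)=p_1\cdots p_r/\phi(p_1\cdots p_r)$, is hypothesis (i) of Theorem \ref{thm.mindeg} and also gives $\phi(n)\geq n/2\geq n/r$. If instead $p_{i+1}>rp_i$ for all $i$, then $\phi(p_{i+1})=p_{i+1}-1\geq rp_i\geq r(p_i-1)=r\phi(p_i)$, which is hypothesis (ii) of Theorem \ref{thm.mindeg}; moreover $p_i>r^{i-1}p_1$ for $i\geq 2$, so, using $\prod(1-x_j)\geq 1-\sum x_j$ and $\sum_{i\geq 2}r^{-(i-1)}<1/(r-1)$,
\[
\frac{\phi(n)}{n}=\prod_{i=1}^{r}\Big(1-\frac1{p_i}\Big)>\Big(1-\frac1{p_1}\Big)\prod_{i=2}^{r}\Big(1-\frac1{r^{i-1}p_1}\Big)\geq\Big(1-\frac1{p_1}\Big)\Big(1-\frac1{(r-1)p_1}\Big)\geq\frac{2r-3}{4(r-1)}\geq\frac1r
\]
for $r\geq 3$ (the last inequality rearranges to $2r^{2}-7r+4\geq 0$; when $r=2$ there is nothing to compare in the next step). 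In either case Theorem \ref{thm.mindeg} gives $\delta(\mathcal{P}(C_n))=\min\{\deg(p_s^{\alpha_s}):t\leq s\leq r\}$, and $\phi(n)\geq n/r$ is available.

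For the second step, set $W=\phi(n)/n$. Using (\ref{eqn-2}) and $\phi(n/p_s^{\alpha_s})=\phi(n)/(p_s^{\alpha_s-1}(p_s-1))$, one rewrites the degree of a prime-power divisor as
\[
\deg\!\big(p_s^{\alpha_s}\big)=\frac{n}{p_s^{\alpha_s}}+\phi(n)+\phi(n)\,u_s-1,\qquad u_s:=\frac1{p_s-1}\Big(1-\frac1{p_s^{\alpha_s-1}}\Big),
\]
so that, for $s<r$,
\[
\deg\!\big(p_s^{\alpha_s}\big)-\deg\!\big(p_r^{\alpha_r}\big)=n\Big(\frac1{p_s^{\alpha_s}}+Wu_s-\frac1{p_r^{\alpha_r}}-Wu_r\Big).
\]
Since $1-1/p_s$ is a factor of $W$ we have $W\leq 1-1/p_s$, and this yields $\tfrac1{p_s^{\alpha_s}}+Wu_s\geq\tfrac{W}{p_s-1}$; likewise $W\leq 1-1/p_r$ gives $W+1/p_r\leq 1$, hence $\tfrac1{p_r^{\alpha_r}}+Wu_r\leq\tfrac1{p_r}$ (an equality if $\alpha_r=1$, and following from $\tfrac1{p_r^{\alpha_r}}+Wu_r\leq\tfrac1{p_r^{2}}+\tfrac{W}{p_r}$ if $\alpha_r\geq 2$). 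Thus it suffices to show $\tfrac{W}{p_s-1}\geq\tfrac1{p_r}$, i.e. $Wp_r\geq p_s-1$; and under either hypothesis $p_r>rp_{r-1}\geq rp_s$ for $t\leq s\leq r-1$, so $Wp_r>\tfrac1r\cdot rp_s=p_s>p_s-1$ by $\phi(n)\geq n/r$. Hence $\deg(p_s^{\alpha_s})>\deg(p_r^{\alpha_r})$ for every $s$ with $t\leq s\leq r-1$, the minimum in Theorem \ref{thm.mindeg} is attained only at $s=r$, and $\delta(\mathcal{P}(C_n))=\deg(p_r^{\alpha_r})$.

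The routine ingredients are the two elementary inequalities $\tfrac1{p_s^{\alpha_s}}+Wu_s\geq\tfrac{W}{p_s-1}$ and $\tfrac1{p_r^{\alpha_r}}+Wu_r\leq\tfrac1{p_r}$ (each reducing to $W\leq 1-p^{-1}$ for the relevant prime $p$) and the degenerate case $r=2$, where $t=r$ makes the minimizing set $\{p_r^{\alpha_r}\}$. The main obstacle is getting a lower bound for $\phi(n)/n$ strong enough to force $Wp_r\geq p_s-1$ in every remaining case: this is precisely where the hypotheses $p_1\geq r+1$ and $p_{i+1}>rp_i$ are used, the threshold being $\phi(n)/n\geq 1/r$, with $r=3$ under hypothesis (ii) the tightest instance.
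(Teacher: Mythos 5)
Your proposal is correct, and its skeleton matches the paper's: first check that hypotheses (i)/(ii) imply those of Theorem \ref{thm.mindeg} (your telescoping product under (i) in effect re-proves Lemma \ref{prime.ineq}(ii) in the squarefree case, and your derivation of $\phi(p_{i+1})\geq r\phi(p_i)$ under (ii) is exactly the paper's), then show $\deg\left(p_s^{\alpha_s}\right)>\deg\left(p_r^{\alpha_r}\right)$ for $t\leq s\leq r-1$ using $p_r>rp_s$. Where you genuinely diverge is the mechanism of this comparison. The paper invokes the imported strict inequality (\ref{eqn-5}) (extracted from the proof of Proposition 4.5(i) of \cite{PK-CA}) together with Lemma \ref{prime.ineq} applied to the $r-1$ primes other than $p_i$, which yields $\phi\left(\frac{p_1\cdots p_r}{p_i}\right)\geq \frac{p_1\cdots p_r}{rp_i}$ unconditionally, so positivity then comes solely from $p_r>rp_i$. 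You instead derive the exact closed form $\deg\left(p_s^{\alpha_s}\right)=\frac{n}{p_s^{\alpha_s}}+\phi(n)(1+u_s)-1$ and run an elementary comparison that additionally needs the global bound $\phi(n)/n\geq 1/r$; that bound is not unconditional (e.g.\ $n=2\cdot3\cdot5$ violates it), which is precisely why you must do extra work under hypothesis (ii) via the geometric-series estimate and the inequality $2r^2-7r+4\geq 0$ for $r\geq 3$. I checked the details: the rewriting of the degree, the two reductions to $W\leq 1-1/p$ (including the $\alpha_r\geq 2$ case, where your intermediate step again reduces to $W\leq 1-1/p_r$), the strict conclusion $Wp_r>p_s-1$, and the degenerate case $t=r$ (in particular $r=2$) are all sound. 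So your route buys self-containedness --- no appeal to the external inequality (\ref{eqn-5}) --- at the cost of a hypothesis-dependent lower bound on $\phi(n)/n$ that the paper's choice of comparison inequality avoids.
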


For $r=3$, the following theorem is proved in Section \ref{three-primes} which shows that the conclusion of Theorem \ref{thm.mindeg} holds good without any condition involving the prime divisors of $n$.

\begin{theorem}\label{mindeg.3prime}
Let $n=p_1^{\alpha_1}p_2^{\alpha_2}p_3^{\alpha_3}$, where $\alpha_1,\alpha_2,\alpha_3$ are positive integers and $p_1,p_2,p_3$ are prime numbers with $p_1<p_2<p_3$. Then
\begin{align*}
\delta(\mathcal{P}(C_n)) = \min\{\deg\left({p_2^{\alpha_2}}\right),\;\deg\left({p_3^{\alpha_3}}\right)\}.
\end{align*}
\end{theorem}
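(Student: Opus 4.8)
The plan is to reduce the statement to a comparison among the degrees of finitely many divisors of $n$ and then to carry out those comparisons by means of formula~(\ref{eqn-2}). Since $r=3$, the number $n$ is not a prime power, so (as explained in the introduction) $\delta(\mathcal{P}(C_n))<n-1$ and $\delta(\mathcal{P}(C_n))=\min\{\deg(b):b\text{ is a proper divisor of }n\}$; recall also that $\deg(a)=\deg(\gcd(a,n))$. Because $p_2^{\alpha_2}$ and $p_3^{\alpha_3}$ are proper divisors of $n$, the bound $\delta(\mathcal{P}(C_n))\le\min\{\deg(p_2^{\alpha_2}),\deg(p_3^{\alpha_3})\}$ is immediate, so the whole content is the reverse inequality $\deg(b)\ge\min\{\deg(p_2^{\alpha_2}),\deg(p_3^{\alpha_3})\}$ for every proper divisor $b=p_1^{i}p_2^{j}p_3^{k}$ of $n$. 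The main tool is a monotonicity statement. Fix a prime $p\mid n$, put $\alpha=v_p(n)$ and $N=n/p^{\alpha}$, and let $c\mid N$. Using~(\ref{eqn-2}) and the multiplicativity of $\phi$, I would compute the consecutive difference $\deg(cp^{t+1})-\deg(cp^{t})$ and show that, with $\overline{c}=N/c$ and
$$S(c)\ :=\ p\bigl(\phi(\overline{c})-\overline{c}\bigr)\ +\ \sum_{d\mid c,\ d\neq c}\phi(N/d),$$
this difference equals $p^{\alpha-t-2}(p-1)\,S(c)$ for $0\le t\le\alpha-2$ and equals $S(c)+\bigl(\overline{c}-\phi(\overline{c})\bigr)$ for $t=\alpha-1$. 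Since $\overline{c}-\phi(\overline{c})\ge 0$, this gives: if $S(c)\ge 0$ then $t\mapsto\deg(cp^{t})$ is non-decreasing on $\{0,1,\dots,\alpha\}$, hence $\deg(cp^{t})\ge\deg(c)$; and for $c=1$ one has $S(1)=p(\phi(N)-N)<0$ and in fact $t\mapsto\deg(p^{t})$ is strictly decreasing, so $\deg(p^{t})\ge\deg(p^{\alpha})$.

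Next I would make three applications of this tool. (a) Taking $c=1$ for each $p\in\{p_1,p_2,p_3\}$ yields $\deg(p_\ell^{m})\ge\deg(p_\ell^{\alpha_\ell})$ for all $m$ and all $\ell$. (b) Taking $p=p_1$, so $N=p_2^{\alpha_2}p_3^{\alpha_3}$, I would prove that $S(c)\ge 0$ for every divisor $c>1$ of $N$; reducing $S(c)$ to closed form via $\sum_{j=a}^{b}\phi(p^{j})=p^{b}-p^{a-1}$, this boils down to inequalities such as $(p_2^{a+1}-1)(p_3^{b+1}-1)-(p_2-1)(p_3-1)\ge p_1(p_2+p_3-1)$, which follow from $p_2-1\ge p_1$ and $p_3\ge p_2+2$. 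Consequently $\deg(p_1^{i}c)\ge\deg(c)$, i.e. the $p_1$-part of a divisor divisible by $p_2$ or $p_3$ may be discarded without increasing its degree. (c) Taking $p=p_2$ and $c=p_3^{k}$ with $1\le k\le\alpha_3$ (so $\overline{c}=p_1^{\alpha_1}p_3^{\alpha_3-k}$), a similar computation shows $S(c)\ge 0$ — here the estimate reduces to $(1-1/p_1)\bigl(p_2(1-1/p_3)+p_3^{k}-1\bigr)\ge p_2$, again a consequence of $p_3\ge p_2+2$ — so $\deg(p_2^{j}p_3^{k})\ge\deg(p_3^{k})$.

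With (a)--(c) in hand the proof would conclude quickly. Let $b=p_1^{i}p_2^{j}p_3^{k}$ be a proper divisor of $n$. If $j=k=0$, then by (a) $\deg(b)\ge\deg(p_1^{\alpha_1})$; since $\gcd(p_1^{\alpha_1},n/p_1^{\alpha_1})=\gcd(p_3^{\alpha_3},n/p_3^{\alpha_3})=1$, formula~(\ref{eqn-2}) collapses to $\deg(d)=\tfrac{n}{d}-1+\phi\!\left(\tfrac{n}{d}\right)(d-1)$ for $d\in\{p_1^{\alpha_1},p_3^{\alpha_3}\}$, and an elementary manipulation — writing $\tfrac{n}{d}-\phi\!\left(\tfrac{n}{d}\right)$ in closed form and again invoking $p_2-1\ge p_1$, $p_3\ge p_2+2$ — gives $\deg(p_1^{\alpha_1})>\deg(p_3^{\alpha_3})$. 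If $j+k\ge 1$, then by (b) $\deg(b)\ge\deg(p_2^{j}p_3^{k})$, and then $\deg(p_2^{j}p_3^{k})\ge\deg(p_3^{k})\ge\deg(p_3^{\alpha_3})$ by (c) and (a) when $k\ge 1$, while $\deg(p_2^{j}p_3^{k})=\deg(p_2^{j})\ge\deg(p_2^{\alpha_2})$ by (a) when $k=0$. In every case $\deg(b)\ge\min\{\deg(p_2^{\alpha_2}),\deg(p_3^{\alpha_3})\}$, as required.

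The part I expect to be the real work is establishing the sign inequalities $S(c)\ge 0$ in (b) and (c) together with the comparison $\deg(p_1^{\alpha_1})>\deg(p_3^{\alpha_3})$: each is a finite but somewhat delicate elementary inequality in $p_1<p_2<p_3$ and $\alpha_1,\alpha_2,\alpha_3$, and the effort goes into splitting cases (for instance according to whether $v_{p_2}(c)=\alpha_2$ or $v_{p_3}(c)=\alpha_3$) so that straightforward bounds suffice. One should also note that the ``top-step'' term $\overline{c}-\phi(\overline{c})$ in the monotonicity tool could a priori turn a strict decrease of $t\mapsto\deg(cp^{t})$ into an interior minimum; this causes no trouble here because applications (a)--(c) only ever use directions in which $S(c)\ge 0$, so all consecutive differences are non-negative and genuine monotonicity holds.
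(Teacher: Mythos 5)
Your argument is correct, but it is organized differently from the paper's. The paper splits on the size of $p_1$: for $p_1\geq 5$ it simply invokes Theorem \ref{thm.mindeg} via Lemma \ref{prime.ineq}(ii) (so the real work there is Proposition \ref{prop1}), and for $p_1\in\{2,3\}$ it uses Proposition \ref{degcompare}(i),(ii),(iv) to reduce to vertices $p_i^{\beta_i}p_j^{\beta_j}$ and then strips off the smaller prime with two ad hoc lemmas (Lemma \ref{lem.beta2} for $\beta_j\geq 2$, Lemma \ref{lem1} for $\beta_j=1$ under the numerical condition (\ref{eqn-12}), checked case by case for $(p_1,p_2)\in\{(2,3),(3,5)\}$ etc.). You instead prove a single self-contained monotonicity criterion: your consecutive-difference formula $\deg(cp^{t+1})-\deg(cp^t)=p^{\alpha-t-2}(p-1)S(c)$ (with the extra nonnegative term $\overline{c}-\phi(\overline{c})$ at the top step) is a correct computation from (\ref{eqn-2}), and the sign conditions $S(c)\geq 0$ in your steps (b) and (c) do hold for all $p_1<p_2<p_3$ — I checked the boundary cases $a=\alpha_2$, $b=\alpha_3$ as well as the generic one, where your reduction to $(p_2^{a+1}-1)(p_3^{b+1}-1)-(p_2-1)(p_3-1)\geq p_1(p_2+p_3-1)$ is algebraically right and follows from $p_1\leq p_2-1$ and $p_3\geq p_2+1$. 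What your route buys is uniformity (no case split on $p_1$, no appeal to Theorem \ref{thm.mindeg} and hence none of the Proposition \ref{prop1} computation) and a single organizing principle — strip primes one at a time according to the sign of $S(c)$ — of which the paper's Lemmas \ref{lem.beta2} and \ref{lem1} are special instances; the price is that you must verify several elementary inequalities yourself, and you re-derive facts the paper already supplies: your step (a) is Proposition \ref{degcompare}(ii), your comparison $\deg(p_1^{\alpha_1})>\deg(p_3^{\alpha_3})$ is exactly Proposition \ref{degcompare}(i), and the $j,k\geq 1$ part of your step (b) is Proposition \ref{degcompare}(iv), so citing those would shorten your write-up considerably. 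Your closing remark about possible interior minima is well taken and correctly dispensed with, since every direction you actually use has all consecutive differences of one sign.
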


\subsection{Remark}
We remark that Proposition \ref{mdv} can be obtained from Theorems \ref{mindeg.main} and \ref{thm.mindeg}.
\begin{enumerate}[$\bullet$]
\item If $r=3$ and $n=p_1p_2p_3$, then $\phi(p_r)=\phi(p_3)> \phi(p_2)=(r-2)\phi(p_{r-1})$ and so Proposition \ref{mdv}(ii) follows from the last part of Theorem \ref{mindeg.main}.

\item Suppose that $r=4$ and $n=p_1p_2p_3p_4$. If $n$ is odd, then $p_1\geq 3$ and so $2\phi(p_1p_2)> p_1p_2$ by Lemma \ref{prime.ineq}(ii). Then $\phi(p_4)>\phi(p_3) >\left( \dfrac{p_1p_2}{\phi(p_1p_2)} - 1\right) \phi(p_{3})$. If $n$ is even, then $p_1=2$ and so $1+\left( \dfrac{p_1p_2}{\phi(p_1p_2)} - 1 \right) \phi(p_3)=p_3 + \displaystyle \frac{2(p_3-1)}{p_2-1}$. In this case, $p_4\geq 1+ \left( \dfrac{p_1p_2}{\phi(p_1p_2)} - 1 \right) \phi(p_3)$ if and only if $p_4\geq p_3 + \displaystyle \frac{2(p_3-1)}{p_2-1}$. Then it follows that Proposition \ref{mdv}(iii) can be obtained from Theorem \ref{mindeg.main}.

\item Finally, suppose that $n=p_1^{\alpha_1}p_2^{\alpha_2}$. If $p_1\geq 3$, then $2\phi(p_1p_2)> p_1p_2$ by Lemma \ref{prime.ineq}(ii). If $p_1=2$, then $\phi(p_{2}) \geq 2=2 \phi(p_1)$. Thus condition (i) or (ii) of Theorem \ref{thm.mindeg} is satisfied and hence Proposition \ref{mdv}(i) follows from Theorem \ref{thm.mindeg}.
\end{enumerate}

\section{Preliminaries}

Recall that $\phi$ is a multiplicative function, that is, $\phi(ab)=\phi(a)\phi(b)$ for any two positive integers $a,b$ which are relatively primes. We have $\phi(p^k)=p^{k-1}\phi(p)$ for any prime number $p$ and positive integer $k$. Also,
\begin{equation}\label{eqn-2-1}
\underset{d|m}\sum \phi(d) = m
\end{equation}
for every positive integer $m$. We need the following two inequalities: the first one can be found in \cite[Lemma 3.1]{CPS-2} and the second one was proved in \cite{CPS} while proving Corollary 1.4.

\begin{lemma}\cite{CPS,CPS-2}\label{prime.ineq}
Let $p_1< p_2< \cdots < p_t$ be prime numbers. Then the following hold:
\begin{enumerate}[\rm(i)]
\item $(t+1)\phi(p_1p_2 \cdots p_t) \geq p_1p_2 \cdots p_t$, with equality if and only if $(t, p_1)=(1,2)$ or $(t, p_1,p_2)=(2,2,3)$.
\item If $p_1 \geq t+1$, then  $2\phi(p_1 p_2 \cdots p_t) \geq p_1 p_2 \cdots p_t$, with equality if and only if $t=1$ and $p_1=2$.
\end{enumerate}		
\end{lemma}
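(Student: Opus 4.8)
The plan is to reduce both inequalities to upper bounds on a single product. Dividing through by $\phi(p_1p_2\cdots p_t)=\prod_{i=1}^t(p_i-1)$, part (i) becomes $\prod_{i=1}^t\frac{p_i}{p_i-1}\le t+1$ and part (ii) becomes $\prod_{i=1}^t\frac{p_i}{p_i-1}\le 2$. Since $x\mapsto\frac{x}{x-1}$ is strictly decreasing for $x>1$, a lower bound on each $p_i$ produces an upper bound on the product, and the only difference between the two parts is which elementary lower bound on $p_i$ is available.

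For part (i) I would use $p_i\ge i+1$, which holds because $p_1<p_2<\cdots<p_t$ are integers with $p_1\ge 2$, so $p_i\ge p_1+(i-1)\ge i+1$. This gives the telescoping estimate
\[ \prod_{i=1}^t\frac{p_i}{p_i-1}\ \le\ \prod_{i=1}^t\frac{i+1}{i}\ =\ t+1. \]
For part (ii) the extra hypothesis $p_1\ge t+1$ upgrades this to $p_i\ge p_1+(i-1)\ge t+i$, and then
\[ \prod_{i=1}^t\frac{p_i}{p_i-1}\ \le\ \prod_{i=1}^t\frac{t+i}{t+i-1}\ =\ \frac{2t}{t}\ =\ 2. \]
Multiplying back through by $\prod_{i=1}^t(p_i-1)$ recovers the two stated inequalities.

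It remains to pin down the equality cases. In each part all the factorwise comparisons point the same way, so equality in the product forces $p_i$ to equal its lower bound for every $i$. For part (i) this means $p_i=i+1$ for all $i$; since $i+1=4$ is composite, this is impossible once $t\ge 3$, whereas for $t=1$ it forces $p_1=2$ and for $t=2$ it forces $(p_1,p_2)=(2,3)$, exactly the asserted equality cases (and one checks directly that these do give equality). For part (ii) it means $p_i=t+i$ for all $i$; if $t\ge 2$ then $p_1=t+1$ and $p_2=t+2$ would be consecutive integers, both prime and both exceeding $2$, which is impossible since one of two consecutive integers is even. Hence $t=1$, and then equality in $\frac{p_1}{p_1-1}\le 2$ forces $p_1=2$.

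I do not anticipate a genuine obstacle in the analytic estimates, which collapse to telescoping products the moment the right lower bound on $p_i$ is written down. The only step requiring care is the equality analysis: one must verify that each individual inequality is tight precisely when $p_i$ meets its bound, and then use primality twice --- to rule out $p_i=i+1$ for $i\ge 3$ in part (i), and to rule out a pair of consecutive primes both larger than $2$ in part (ii).
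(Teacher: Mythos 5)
Your proof is correct, and it is worth noting that the paper itself gives no argument for this lemma: part (i) is imported from \cite[Lemma 3.1]{CPS-2} and part (ii) from the proof of Corollary 1.4 in \cite{CPS}, so there is no internal proof to compare against. Your telescoping reduction is a clean, self-contained substitute: rewriting both claims as $\prod_{i=1}^t \frac{p_i}{p_i-1}\le t+1$ (resp.\ $\le 2$) and feeding in the bound $p_i\ge p_1+(i-1)$, which under the hypothesis of (ii) becomes $p_i\ge t+i$, collapses everything to the products $\prod_{i=1}^t\frac{i+1}{i}=t+1$ and $\prod_{i=1}^t\frac{t+i}{t+i-1}=2$. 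The equality analysis is also handled correctly: since every factor satisfies $\frac{p_i}{p_i-1}\le\frac{b_i}{b_i-1}$ with $b_i$ the integer lower bound and $x\mapsto\frac{x}{x-1}$ strictly decreasing, equality in the product forces $p_i=b_i$ for all $i$, and primality then eliminates everything except $(t,p_1)=(1,2)$ and $(t,p_1,p_2)=(2,2,3)$ in (i) (via $p_3=4$ being composite) and $t=1$, $p_1=2$ in (ii) (via two consecutive integers $>2$ not both being prime). This gives the paper a proof of the cited inequality that is arguably more elementary than chasing it through the references, at no extra length.
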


Certain inequalities involving degree of vertices of $\mathcal{P}(C_n)$ were proved in \cite[Proposition 4.5]{PK-CA}. From the proof of these inequalities, it can be seen that those inequalities are in fact strict and we have stated them accordingly in the following proposition.

\begin{proposition}\cite{PK-CA}\label{degcompare}
Let $n=p_1^{\alpha_1}p_2^{\alpha_2} \cdots p_r^{\alpha_r}$, where $r \geq 2$, $\alpha_1,\alpha_2,\ldots, \alpha_r$ are positive integers and $p_1,p_2,\ldots, p_r$ are prime numbers with $p_1<p_2<\cdots <p_r$. Then the following strict inequalities hold in $\mathcal{P}(C_n):$
\begin{enumerate}[\rm(i)]
\item $\deg\left({p_1^{\alpha_1}}\right) > \deg\left({p_r^{\alpha_r}}\right)$.
\item $\deg\left({p_i^{\gamma}}\right) > \deg\left({p_i^{\beta}}\right)$ for $1 \leq i \leq r$ and $1 \leq \gamma < \beta \leq \alpha_i$.
\item $\deg\left({p_i^\beta}\right) > \deg\left({p_j^\beta}\right)$ for $1 \leq i < j \leq r$ and $1 \leq \beta \leq \min \{\alpha_i, \alpha_j\}$.
\item $\deg\left({p_1^{\beta_1}p_2^{\beta_2}\cdots p_r^{\beta_r}}\right) > \deg\left({p_2^{\beta_2}\cdots p_r^{\beta_r}}\right)$, where $1 \leq \beta_i \leq \alpha_i$ for each $i\in\{1,2,\ldots, r\}$.
\end{enumerate}
\end{proposition}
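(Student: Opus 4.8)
The plan is to reprove all four inequalities from a single closed‑form expression for the relevant degrees; this recovers the statement of \cite{PK-CA} and makes the strictness visible. Write $a\in C_n$ with $\gcd(a,n)=b$ and set $e:=n/b$, so $(\ref{eqn-2})$ reads $\deg(a)=e+T(e)-\phi(e)-1$ where $T(e):=\sum_{e\mid m\mid n}\phi(m)$. Multiplicativity of $\phi$ together with $(\ref{eqn-2-1})$ makes this separable: if $e=\prod_i p_i^{e_i}$ then $T(e)=\prod_i h_i$ with $h_i=p_i^{\alpha_i}$ when $e_i=0$ and $h_i=p_i^{\alpha_i}-p_i^{e_i-1}$ when $e_i\geq1$. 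Taking $a=p_i^{\beta}$ (so $e=n/p_i^{\beta}$), every factor except the $i$‑th collapses to $\phi(p_k^{\alpha_k})$, and a short simplification gives
$$\deg\!\left(p_i^{\beta}\right)=n-1-D_i\bigl(1-p_i^{-\beta}\bigr),\qquad D_i:=n-\frac{p_i\,\phi(n)}{p_i-1}=n\Bigl(1-\prod_{k\neq i}\bigl(1-\tfrac1{p_k}\bigr)\Bigr)>0 ,$$
and, since $D_i=p_i^{\alpha_i}(m_i-\phi(m_i))$ with $m_i:=n/p_i^{\alpha_i}$, also $\deg(p_i^{\alpha_i})=n-1-(p_i^{\alpha_i}-1)(m_i-\phi(m_i))$.

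From here (ii) and (iii) are immediate. For (ii), $\deg(p_i^{\gamma})-\deg(p_i^{\beta})=D_i(p_i^{-\gamma}-p_i^{-\beta})>0$ when $\gamma<\beta$. For (iii), $\deg(p_i^{\beta})-\deg(p_j^{\beta})=D_j(1-p_j^{-\beta})-D_i(1-p_i^{-\beta})$, and $p_i<p_j$ forces simultaneously $0<D_i<D_j$ (because $\prod_{k\neq i}(1-1/p_k)=\prod_k(1-1/p_k)\big/(1-1/p_i)$ strictly increases as the omitted prime decreases) and $0<1-p_i^{-\beta}<1-p_j^{-\beta}$, so the difference is strictly positive. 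For (i), when $\alpha_1\leq\alpha_r$ one simply chains (iii) at exponent $\alpha_1$ with (ii): $\deg(p_1^{\alpha_1})>\deg(p_r^{\alpha_1})\geq\deg(p_r^{\alpha_r})$; when $\alpha_1>\alpha_r$ one must instead establish $(p_1^{\alpha_1}-1)(m_1-\phi(m_1))<(p_r^{\alpha_r}-1)(m_r-\phi(m_r))$, i.e. $D_1(1-p_1^{-\alpha_1})<D_r(1-p_r^{-\alpha_r})$, directly.

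For (iv), set $a=p_1^{\beta_1}\cdots p_r^{\beta_r}$ and $a'=a/p_1^{\beta_1}$. With $e:=n/a$ one has $n/a'=p_1^{\beta_1}e$, and these two orders agree in every coordinate $i\geq2$ while $n/a'$ carries the full part $p_1^{\alpha_1}$; hence only the $p_1$‑factor of $T$ and of $\phi$ changes. Writing $e=p_1^{e_1}R$ with $R=\prod_{i\geq2}p_i^{e_i}$, $Q=\prod_{i\geq2}h_i$ and $S=\prod_{i\geq2}\phi(p_i^{e_i})$, the separable form gives
$$\deg(a)-\deg(a')=-e\bigl(p_1^{\beta_1}-1\bigr)+\bigl(h_1(e)-\phi(p_1^{\alpha_1})\bigr)\,Q+\bigl(\phi(p_1^{\alpha_1})-\phi(p_1^{e_1})\bigr)\,S ,$$
whose last two terms are nonnegative, the last strictly positive. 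The key observation is that each ratio $h_i/p_i^{e_i}$ equals $p_i^{\beta_i}-p_i^{-1}$ (or $p_i^{\alpha_i}$ if $e_i=0$), which already exceeds $p_1$ for $i=2$ since $p_2\geq p_1+1$, and exceeds $1$ for $i\geq3$; hence $Q>p_1R$. Since $h_1(e)-\phi(p_1^{\alpha_1})=p_1^{\alpha_1-1}-p_1^{e_1-1}$ (with the evident modification when $e_1=0$) and $e(p_1^{\beta_1}-1)=(p_1^{\alpha_1}-p_1^{e_1})R=p_1\bigl(p_1^{\alpha_1-1}-p_1^{e_1-1}\bigr)R$, the bound $Q>p_1R$ alone makes the middle term strictly larger than $e(p_1^{\beta_1}-1)$, so the difference is strictly positive.

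The step I expect to be the main obstacle is the case $\alpha_1>\alpha_r$ of part (i). There $p_1^{\alpha_1}>p_r^{\alpha_r}$ is possible, so in $D_1(1-p_1^{-\alpha_1})<D_r(1-p_r^{-\alpha_r})$ the factor $1-p_1^{-\alpha_1}$ is the larger one while $D_1$ is the smaller one, and neither the monotonicity in the exponent nor the monotonicity in the prime settles the comparison by itself. Closing it requires a genuine quantitative estimate: one has $D_r-D_1=\phi(n)\,(p_r-p_1)\big/\bigl((p_1-1)(p_r-1)\bigr)$ and must show $(D_r-D_1)/D_r$ dominates $(p_r^{-\alpha_r}-p_1^{-\alpha_1})/(1-p_1^{-\alpha_1})$; using $D_r<n$ and $(p_r^{-\alpha_r}-p_1^{-\alpha_1})/(1-p_1^{-\alpha_1})<p_r^{-\alpha_r}$ this reduces, after simplification, to the concrete prime inequality $\tfrac{(p_r-p_1)p_r^{\alpha_r-1}}{p_1}\prod_{k=2}^{r-1}(1-1/p_k)\geq1$ (for $r\geq3$; $r=2$ being handled by the elementary direct argument), which one establishes via bounds on $\phi$ of squarefree integers in the spirit of Lemma \ref{prime.ineq}.
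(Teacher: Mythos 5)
Your closed form is correct and does most of the work: writing $\deg(p_i^{\beta})=n-1-D_i\bigl(1-p_i^{-\beta}\bigr)$ with $D_i=n\bigl(1-\prod_{k\neq i}(1-1/p_k)\bigr)=p_i^{\alpha_i}\bigl(m_i-\phi(m_i)\bigr)$ checks out for all $1\leq\beta\leq\alpha_i$, and your arguments for (ii), (iii), (iv) and for (i) in the case $\alpha_1\leq\alpha_r$ are complete and correct. (For what it is worth, the paper itself does not prove this proposition; it imports it from \cite{PK-CA} and only remarks that the inequalities there are strict, so yours is a genuinely independent derivation.) The problem is exactly where you predicted it: part (i) with $\alpha_1>\alpha_r$ is not closed, and the route you sketch cannot be closed. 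The ``concrete prime inequality'' you reduce to, namely $\tfrac{(p_r-p_1)p_r^{\alpha_r-1}}{p_1}\prod_{k=2}^{r-1}(1-1/p_k)\geq1$, is false in general: take $r=3$, $(p_1,p_2,p_3)=(101,103,107)$, $\alpha_3=1$ (and any $\alpha_1\geq2$, so you are in the case you need it for); then the left-hand side is $\tfrac{6}{101}\cdot\tfrac{102}{103}<0.06$.

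The loss occurs at the step $D_r<n$. When $p_1$ is large, $D_r=n\bigl(1-\prod_{k\neq r}(1-1/p_k)\bigr)$ is only a small fraction of $n$ (roughly $n\sum_{k\neq r}1/p_k$), so replacing the denominator $D_r$ by $n$ discards a large factor, and the true inequality has no slack to spare: in the example above the required comparison $(p_1^{\alpha_1}-1)\bigl(m_1-\phi(m_1)\bigr)<(p_r^{\alpha_r}-1)\bigl(m_r-\phi(m_r)\bigr)$ amounts (per $101^{\alpha_1-1}$) to $209\cdot101=21109$ versus $106\cdot203=21518$, a margin of about two percent, which no estimate that throws away the factor $D_r/n$ can recover. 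The fallback you mention, bounding $\prod_{k=2}^{r-1}(1-1/p_k)$ via Lemma \ref{prime.ineq}(i), leads to the requirement $p_r\geq p_1+r(p_1-1)$, which already fails for $(3,5,7)$, so that does not rescue the sketch either. Hence the hard case of (i) needs a genuinely different argument that keeps track of $D_1$ and $D_r$ (equivalently of $m_i-\phi(m_i)$) more precisely rather than comparing against $n$; also, the $r=2$ case of (i), which you defer to ``the elementary direct argument,'' should be written out, although it is indeed easy. As it stands, (i) is unproven for $\alpha_1>\alpha_r$.
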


We need the strict inequality (\ref{eqn-5}) stated in the following lemma while proving Corollaries \ref{exact.mindeg} and \ref{coro}.

\begin{lemma}
Let $n=p_1^{\alpha_1}p_2^{\alpha_2} \cdots p_r^{\alpha_r}$, where $r \geq 2$, $\alpha_1,\alpha_2,\ldots, \alpha_r$ are positive integers and $p_1,p_2,\ldots, p_r$ are prime numbers with $p_1<p_2<\cdots <p_r$. For $i\in\{1,2,\ldots, r-1\}$, the following strict inequality holds in $\mathcal{P}(C_n):$
\begin{equation}\label{eqn-5}
\deg\left({p_i^{\alpha_i}}\right) - \deg\left({p_r^{\alpha_r}}\right) > p_i^{\alpha_i-1} \left[(p_r-1)\phi\left(\frac{n}{p_i^{\alpha_i}p_r^{\alpha_r}}\right) - \frac{n}{p_i^{\alpha_i -1}p_r^{\alpha_r}}\right].
\end{equation}
\end{lemma}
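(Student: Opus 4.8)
The plan is to compute both $\deg(p_i^{\alpha_i})$ and $\deg(p_r^{\alpha_r})$ from formula (\ref{eqn-2}), express everything in terms of the three pairwise coprime quantities $A=p_i^{\alpha_i}$, $B=p_r^{\alpha_r}$ and $M=n/(p_i^{\alpha_i}p_r^{\alpha_r})=\prod_{j\neq i,r}p_j^{\alpha_j}$, and then reduce (\ref{eqn-5}) to an elementary inequality that follows at once from $\phi(M)\le M$ together with $p_r>p_i$.

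First I would fix $i\in\{1,\dots,r-1\}$; then $n=ABM$, and since $A$, $B$, $M$ are pairwise coprime, $\phi(n/A)=\phi(BM)=\phi(B)\phi(M)$ and $\phi(n/B)=\phi(A)\phi(M)$. As $\gcd(p_i^{\alpha_i},n)=A$, applying the second expression in (\ref{eqn-2}) to the vertex $p_i^{\alpha_i}$ and evaluating $\sum_{d\mid A}\phi(n/d)=\phi(M)\phi(B)\sum_{d\mid A}\phi(d)=\phi(M)\phi(B)A$ by (\ref{eqn-2-1}) gives
\[
\deg(p_i^{\alpha_i})=\frac{n}{A}+\phi(M)\phi(B)(A-1)-1=BM+\phi(M)\phi(B)(A-1)-1,
\]
and symmetrically $\deg(p_r^{\alpha_r})=AM+\phi(M)\phi(A)(B-1)-1$. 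Subtracting,
\[
\deg(p_i^{\alpha_i})-\deg(p_r^{\alpha_r})=(B-A)M+\phi(M)\bigl[\phi(B)(A-1)-\phi(A)(B-1)\bigr].
\]

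Next I would rewrite the right-hand side of (\ref{eqn-5}). Since $n/(p_i^{\alpha_i-1}p_r^{\alpha_r})=p_iM$, it equals $p_i^{\alpha_i-1}\bigl[(p_r-1)\phi(M)-p_iM\bigr]=p_i^{\alpha_i-1}(p_r-1)\phi(M)-AM$, so (\ref{eqn-5}) is equivalent to
\[
BM+\phi(M)\bigl[\phi(B)(A-1)-\phi(A)(B-1)\bigr]>p_i^{\alpha_i-1}(p_r-1)\phi(M).
\]
Using $A\phi(B)-B\phi(A)=p_i^{\alpha_i-1}p_r^{\alpha_r-1}\bigl[p_i(p_r-1)-p_r(p_i-1)\bigr]=p_i^{\alpha_i-1}p_r^{\alpha_r-1}(p_r-p_i)$, I would expand $\phi(B)(A-1)-\phi(A)(B-1)$, transpose all terms to one side, collect the $p_i^{\alpha_i-1}$ terms, and apply $(p_i-1)-(p_r-1)=-(p_r-p_i)$; writing $BM=p_r^{\alpha_r-1}(p_rM)$, the inequality becomes
\[
p_r^{\alpha_r-1}\bigl[p_rM-(p_r-1)\phi(M)\bigr]+p_i^{\alpha_i-1}(p_r-p_i)\bigl(p_r^{\alpha_r-1}-1\bigr)\phi(M)>0.
\]

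Finally I would conclude by observing that both summands are nonnegative and the first is positive: from $\phi(M)\le M$ we get $p_rM-(p_r-1)\phi(M)\ge p_rM-(p_r-1)M=M\ge1$, and $p_r^{\alpha_r-1}\ge1$, so the first term is at least $1$; in the second term each of the factors $p_i^{\alpha_i-1}$, $p_r-p_i$, $p_r^{\alpha_r-1}-1$ and $\phi(M)$ is nonnegative. Hence the left-hand side is $\ge1>0$, which proves the strict inequality (\ref{eqn-5}). There is no substantial difficulty here beyond the algebraic bookkeeping in the reduction; the points one must get right are the identities $n/A=BM$ and $n/(p_i^{\alpha_i-1}p_r^{\alpha_r})=p_iM$, and the cancellation turning $\phi(B)(A-1)-\phi(A)(B-1)-p_i^{\alpha_i-1}(p_r-1)$ into $p_i^{\alpha_i-1}(p_r-p_i)(p_r^{\alpha_r-1}-1)-p_r^{\alpha_r-1}(p_r-1)$.
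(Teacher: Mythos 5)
Your proof is correct: the degree formula (\ref{eqn-2}) applied to $A=p_i^{\alpha_i}$ and $B=p_r^{\alpha_r}$ with $M=n/(AB)$, together with multiplicativity of $\phi$ and $\sum_{d\mid A}\phi(A/d)=A$, does give $\deg(A)=BM+\phi(M)\phi(B)(A-1)-1$ and $\deg(B)=AM+\phi(M)\phi(A)(B-1)-1$, and your algebraic reduction is accurate; I checked that the difference minus the right-hand side of (\ref{eqn-5}) equals exactly
\[
p_r^{\alpha_r-1}\bigl[p_rM-(p_r-1)\phi(M)\bigr]+p_i^{\alpha_i-1}(p_r-p_i)\bigl(p_r^{\alpha_r-1}-1\bigr)\phi(M)\;\geq\; p_r^{\alpha_r-1}M\;\geq\;1,
\]
(the identity is confirmed, e.g., for $n=18$, $i=1$, where both sides equal $5$). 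The paper, by contrast, does not prove the lemma directly: it invokes the proof of Proposition 4.5(i) of \cite{PK-CA}, relabelling the subscript $1$ as $i$ and taking $m=n/(p_i^{\alpha_i}p_r^{\alpha_r})$, and observes that the first inequality in that cited argument is strict. So your route is genuinely different in character: it is self-contained, replaces the cited chain of estimates by an exact closed-form identity for $\deg(p_i^{\alpha_i})-\deg(p_r^{\alpha_r})$ minus the bound, and makes the strictness (indeed a quantitative surplus of at least $p_r^{\alpha_r-1}M$) transparent rather than something one must extract by inspecting an external proof. The only cost is the algebraic bookkeeping, which you carry out correctly; the key simplifications $n/A=BM$, $n/(p_i^{\alpha_i-1}p_r^{\alpha_r})=p_iM$ and $A\phi(B)-B\phi(A)=p_i^{\alpha_i-1}p_r^{\alpha_r-1}(p_r-p_i)$ are all right.
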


\begin{proof}
This follows from the proof of \cite[Proposition 4.5(i)]{PK-CA}, in which we replace the subscript $1$ by $i$ and take $m=\dfrac{n}{p_i^{\alpha_i}p_r^{\alpha_r}}$. We note that the first inequality in the proof of \cite[Proposition 4.5(i)]{PK-CA} is strict.
\end{proof}

\begin{lemma}\label{pp1}
Let $n=p_1p_2\cdots p_r$, where $p_1, p_2, \ldots, p_r$ are prime distinct numbers and let $a,b$ be two distinct proper divisors of $n$ such that $\dfrac{a}{b}=\dfrac{p_k}{p_l}$ for some $k,l\in\{1,2,\ldots ,r\}$. If $a<b$, then $\deg(a)>\deg(b)$.
\end{lemma}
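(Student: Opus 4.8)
The plan is to make the relation $a/b=p_k/p_l$ explicit at the level of prime factorizations and then compare $\deg(a)$ and $\deg(b)$ term by term via formula (\ref{eqn-2}).

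First I would unwind the hypothesis. Since $a\neq b$ we must have $k\neq l$. Writing $a=ca_0$, $b=cb_0$ with $c=\gcd(a,b)$ and $\gcd(a_0,b_0)=1$, the equality $a_0p_l=b_0p_k$ together with $\gcd(a_0,b_0)=1$ forces $a_0\mid p_k$ and $b_0\mid p_l$, and then (the options $a_0=1$ or $b_0=1$ being impossible for distinct primes) $a_0=p_k$ and $b_0=p_l$. As $n$ is squarefree, so are $a$ and $b$, hence $c$ is squarefree and coprime to $p_kp_l$. Put $e=n/(cp_kp_l)$, so that $n=cp_kp_l\,e$, the integer $e$ is coprime to $cp_kp_l$, and $\gcd(c,e)=1$. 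The assumption $a<b$ means $cp_k<cp_l$, i.e.\ $p_k<p_l$.

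Now $a$ is a divisor of $n$, so (\ref{eqn-2}) gives $\deg(a)=\frac{n}{a}+\sum_{d\mid a,\,d\neq a}\phi(n/d)-1$, and similarly for $b$. Because $c$ is coprime to $p_k$, the divisors of $a=cp_k$ other than $a$ are exactly the divisors of $c$ together with the numbers $dp_k$ for $d\mid c$, $d\neq c$; and likewise for $b=cp_l$ with $p_l$ in place of $p_k$. Subtracting, the contributions coming from the divisors of $c$ cancel, leaving
\[
\deg(a)-\deg(b)=\left(\frac{n}{a}-\frac{n}{b}\right)+\sum_{d\mid c,\,d\neq c}\left[\phi\!\left(\tfrac{n}{dp_k}\right)-\phi\!\left(\tfrac{n}{dp_l}\right)\right].
\]
For $d\mid c$ set $d'=c/d$; then $n/(dp_k)=d'p_le$ and $n/(dp_l)=d'p_ke$, and since $d'e$ is coprime to both $p_k$ and $p_l$, multiplicativity of $\phi$ yields $\phi(n/(dp_k))-\phi(n/(dp_l))=(p_l-p_k)\phi(d'e)$; similarly $\frac{n}{a}-\frac{n}{b}=p_le-p_ke=(p_l-p_k)e$. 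As $d$ runs over the divisors of $c$ other than $c$, the number $d'$ runs over the divisors of $c$ other than $1$, so with $\phi(d'e)=\phi(d')\phi(e)$ and $\sum_{d'\mid c}\phi(d')=c$ from (\ref{eqn-2-1}) one obtains
\[
\deg(a)-\deg(b)=(p_l-p_k)\bigl(e+(c-1)\phi(e)\bigr).
\]
Since $p_k<p_l$ and $e,\phi(e),c\ge 1$, the right-hand side is strictly positive, whence $\deg(a)>\deg(b)$.

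The argument is essentially a direct calculation; the only point needing a little care is identifying the divisor sets of $a=cp_k$ and $b=cp_l$ and checking that the part involving divisors of $c$ cancels exactly (and noting that when $c=1$ the second sum is simply empty, the conclusion then being immediate from $\frac{n}{a}-\frac{n}{b}=(p_l-p_k)e>0$). I do not expect any genuine obstacle: the statement is a ``common factor'' refinement of Proposition \ref{degcompare}(iii), which is the case $c=1$.
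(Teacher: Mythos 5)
Your proof is correct and follows essentially the same route as the paper: write $a=cp_k$, $b=cp_l$ with $c=a/p_k=b/p_l$, subtract the degree formulas so that the divisors of $c$ cancel, and factor out the positive quantity coming from $p_l-p_k$. The only (harmless) difference is that you evaluate the remaining sum in closed form via $\sum_{d\mid c}\phi(d)=c$, obtaining the exact identity $\deg(a)-\deg(b)=(p_l-p_k)\bigl(e+(c-1)\phi(e)\bigr)$, whereas the paper just keeps the strict inequality $\frac{n}{a}-\frac{n}{b}>0$ and the nonnegativity of the factored sum.
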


\begin{proof}
Note that both $a$ and $b$ have the same number of prime divisors, say $s$. Since $a<b$, we have $p_k<p_l$ and $1\leq s\leq r-1$. The lemma follows from Proposition \ref{degcompare}(iii) if $s=1$. Assume that $s\geq 2$. We have
\begin{align*}
\underset{d | a,\: d < a}\sum \phi \left(\frac{n}{d}\right)-\underset{d | b,\: d < b}\sum \phi \left( \frac{n}{d}\right)& = \underset{d | a,\: d < a, \: p_{k} | d}\sum \phi \left( \frac{n}{d}\right)- \underset{d | b,\: d <b, \: p_{l} | d}\sum \phi \left( \frac{n}{d}\right)\\
& = \underset{u \vert \frac{a}{p_{k}},\: u < \frac{a}{p_{k}}}\sum \phi \left( \frac{n}{up_{k}}\right)  - \underset{v | \frac{b}{p_{l}},\: v < \frac{b}{p_{l}}}\sum \phi \left( \frac{n}{vp_{l}}\right)\\
& = \underset{u | \frac{a}{p_{k}},\: u < \frac{a}{p_{k}}}\sum \left[\phi \left( \frac{n}{up_{k}}\right)  - \phi \left( \frac{n}{up_{l}}\right) \right]\\
& = \left[\phi (p_{l})  - \phi (p_{k}) \right] \left[ \underset{u | \frac{a}{p_{k}},\: u < \frac{a}{p_{k}}}\sum \phi \left( \frac{n}{up_{k}p_{l}}\right)\right].
\end{align*}
The second last equality in the above holds as $\dfrac{a}{p_k}=\dfrac{b}{p_l}$. Using the formula (\ref{eqn-2}), we then get
\begin{align*}
\deg(a) - \deg(b)& = \frac{n}{a}-\frac{n}{b} + \underset{d | a,\: d < a}\sum \phi \left(\frac{n}{d}\right)-\underset{d | b,\: d < b}\sum \phi \left( \frac{n}{d}\right)\\
& > \underset{d | a,\: d < a}\sum \phi \left(\frac{n}{d}\right)-\underset{d | b,\: d < b}\sum \phi \left( \frac{n}{d}\right)\\
& = \left[\phi (p_{l})  - \phi (p_{k}) \right] \left[ \underset{u | \frac{a}{p_{k}},\: u < \frac{a}{p_{k}}}\sum \phi \left( \frac{n}{up_{k}p_{l}}\right)\right].
\end{align*}
Since $p_l >p_k$, we have $\phi (p_{l})  - \phi (p_{k})>0$ and it follows from the above that $\deg(a)>\deg(b)$.
\end{proof}

\begin{lemma}\label{phi.sum}
Let $x=p_1^{\alpha_1}p_2^{\alpha_2} \cdots p_r^{\alpha_r}$, where $\alpha_1,\alpha_2,\ldots, \alpha_r$ are positive integers and $p_1,p_2,\ldots, p_r$ are prime numbers. If $y = p_1^{\beta_1}p_2^{\beta_2}\cdots p_r^{\beta_r}$, where $0\leq \beta_i \leq \alpha_i$ for $1 \leq i \leq r$, then
\begin{align*}
\sum_{d | y} \phi \left( \frac{x}{d}\right) = \left( \prod\limits_{\substack{1\leq i \leq r\\\alpha_i = \beta_i}} p_i^{\alpha_i}  \right) \left(\prod\limits_{\substack{1\leq i \leq r\\\alpha_i > \beta_i}} \left(p_i^{\alpha_i} - p_i^{\alpha_i - \beta_i -1}\right) \right).
\end{align*}
\end{lemma}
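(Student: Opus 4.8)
The plan is to exploit the multiplicativity of $\phi$ together with the classical identity (\ref{eqn-2-1}). First I would parametrize the divisors of $y$: since $y=p_1^{\beta_1}\cdots p_r^{\beta_r}$, a positive integer $d$ divides $y$ if and only if $d=p_1^{\gamma_1}\cdots p_r^{\gamma_r}$ with $0\le\gamma_i\le\beta_i$ for each $i$. For such a $d$ we have $x/d=\prod_{i=1}^r p_i^{\alpha_i-\gamma_i}$, and since the factors $p_i^{\alpha_i-\gamma_i}$ are pairwise coprime, multiplicativity of $\phi$ gives $\phi(x/d)=\prod_{i=1}^r\phi\!\left(p_i^{\alpha_i-\gamma_i}\right)$.

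Next, summing over all admissible tuples $(\gamma_1,\ldots,\gamma_r)$ and expanding the product distributively, I obtain
$$\sum_{d\mid y}\phi\!\left(\frac{x}{d}\right)=\sum_{\gamma_1=0}^{\beta_1}\cdots\sum_{\gamma_r=0}^{\beta_r}\ \prod_{i=1}^r\phi\!\left(p_i^{\alpha_i-\gamma_i}\right)=\prod_{i=1}^r\left(\sum_{\gamma=0}^{\beta_i}\phi\!\left(p_i^{\alpha_i-\gamma}\right)\right).$$
So it remains to evaluate each single-prime sum $S_i:=\sum_{\gamma=0}^{\beta_i}\phi\!\left(p_i^{\alpha_i-\gamma}\right)$ and then match the two cases appearing in the stated product.

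For the evaluation of $S_i$ I would split according to whether $\beta_i=\alpha_i$ or $\beta_i<\alpha_i$. If $\beta_i=\alpha_i$, reindexing by $j=\alpha_i-\gamma$ turns $S_i$ into $\sum_{j=0}^{\alpha_i}\phi(p_i^j)=\sum_{d\mid p_i^{\alpha_i}}\phi(d)=p_i^{\alpha_i}$ by (\ref{eqn-2-1}), which is exactly the contribution of the first product. If $\beta_i<\alpha_i$, then $S_i=\sum_{j=\alpha_i-\beta_i}^{\alpha_i}\phi(p_i^j)$ with every exponent $j\ge\alpha_i-\beta_i\ge1$, so $\phi(p_i^j)=p_i^j-p_i^{j-1}$ and the sum telescopes to $p_i^{\alpha_i}-p_i^{\alpha_i-\beta_i-1}$, the contribution of the second product. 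Substituting back into the product above yields the claimed identity.

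There is no real obstacle here; the argument is entirely bookkeeping. The only point requiring a word of care is the telescoping step, where one must check that the lowest exponent $\alpha_i-\beta_i$ is at least $1$, so that $\phi(p_i^j)=p_i^j-p_i^{j-1}$ applies to every term in $S_i$ — this is guaranteed precisely by the hypothesis $\beta_i<\alpha_i$, which is exactly why the two cases in the statement are delineated by whether $\alpha_i=\beta_i$ or $\alpha_i>\beta_i$.
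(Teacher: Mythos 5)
Your proof is correct and follows essentially the same route as the paper: both factor $\sum_{d\mid y}\phi(x/d)$ into a product of single-prime sums via multiplicativity and then evaluate each factor by the two cases $\alpha_i=\beta_i$ (giving $p_i^{\alpha_i}$ by (\ref{eqn-2-1})) and $\alpha_i>\beta_i$ (giving $p_i^{\alpha_i}-p_i^{\alpha_i-\beta_i-1}$). The only difference is that you spell out the telescoping computation behind the paper's equation (\ref{eqn-3}), which the paper simply asserts.
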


\begin{proof}
If $\alpha_i=\beta_i$ for all $i\in\{1,2,\ldots, r\}$, then $y=x$ and the result follows from the fact that $\displaystyle\sum\limits_{d |x} \phi \left( \frac{x}{d}\right) = \displaystyle\sum\limits_{d |x} \phi \left( d\right)=x$. So assume that $\beta_i < \alpha_i$ for at least one $i\in\{1,2,\ldots, r\}$. For each $j\in\{1,2,\ldots, r\}$, observe that
\begin{equation}\label{eqn-3}
\underset{0\leq \gamma_j \leq \beta_j}\sum \phi\left(p_j^{\alpha_j-\gamma_j}\right)= p_j^{\alpha_j}\;  \mbox{ or }\; p_j^{\alpha_j} - p_j^{\alpha_j - \beta_j -1} 
\end{equation}
according as $\alpha_j =\beta_j$ or $\alpha_j >\beta_j$. We have
\begin{align*}
\sum_{d |y} \phi \left( \frac{x}{d}\right) = \sum_{\substack{1 \leq j \leq r \\ 0 \leq  \gamma_j \leq \beta_j }} \phi \left( p_1^{\alpha_1 - \gamma_1} p_2^{\alpha_2 - \gamma_2}\cdots p_r^{\alpha_r - \gamma_r} \right) = \prod_{1\leq j \leq r}  \left( \sum_{0 \leq  \gamma_j \leq \beta_j} \phi\left(p_j^{\alpha_j-\gamma_j}\right)\right),
\end{align*}
and consequently the lemma follows using (\ref{eqn-3}).
\end{proof}

\section{Proof of Theorem \ref{mindeg.main}}\label{distinct-primes}

In this section, we take $n=p_1p_2\cdots p_r$, where $r \geq 3$ and $p_1,p_2,\ldots, p_r$ are prime numbers with $p_1<p_2<\cdots <p_r$.

\begin{lemma}\label{pp2}
$\delta(\mathcal{P}(C_n)) = \min \{\deg(p_sp_{s+1}\cdots p_r): 2 \leq s \leq r\}$.
\end{lemma}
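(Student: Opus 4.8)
The plan is to show that every proper divisor $a$ of $n = p_1p_2\cdots p_r$ has degree at least as large as the degree of one of the $r-1$ "tail products" $p_sp_{s+1}\cdots p_r$, $2\le s\le r$; since these tail products are themselves proper divisors of $n$, the minimum degree is then exactly the minimum over this family. Recall from the introduction that $\delta(\mathcal{P}(C_n))$ is attained at a proper divisor of $n$ (as $n$ is not a prime power), and that $\deg$ depends only on $\gcd$ with $n$, so it suffices to compare degrees of divisors.

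Let $a$ be a proper divisor of $n$, say $a = p_{i_1}p_{i_2}\cdots p_{i_s}$ with $i_1 < i_2 < \cdots < i_s$ and $1\le s \le r-1$. Let $b = p_{r-s+1}p_{r-s+2}\cdots p_r$ be the product of the $s$ largest primes dividing $n$; this is the tail product corresponding to $s$ (for $s = r-1$ it is $p_2p_3\cdots p_r$). I claim $\deg(a) \ge \deg(b)$, which suffices. The idea is to pass from $a$ to $b$ one prime at a time: replace $p_{i_s}$ by $p_r$ (if $i_s < r$), then $p_{i_{s-1}}$ by $p_{r-1}$, and so on, at each stage swapping the current largest-indexed prime factor that is "too small" for the largest available prime not yet used. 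Each such single swap is exactly of the form covered by Lemma \ref{pp1}: if $a'$ is obtained from $a''$ by replacing one prime factor $p_l$ by a larger prime $p_k \nmid a''$ (so $a''/\gcd$ and $a'$ have the same number of prime divisors and their ratio, after identifying, is $p_k/p_l$ up to the common factor), then since $a'' < a'$ we get $\deg(a'') > \deg(a')$. Chaining these inequalities gives $\deg(a) \ge \deg(b)$, with equality only if $a = b$ already.

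The one point requiring care is the bookkeeping of the swaps so that each intermediate step genuinely matches the hypothesis of Lemma \ref{pp1}: after relabeling we need the two divisors to differ in exactly one prime slot, with the removed prime strictly smaller than the inserted prime and the inserted prime not already present. Processing the prime factors of $a$ from the largest index downward and always inserting the largest so-far-unused prime of $n$ guarantees, at each stage, that the inserted prime is larger than the one removed (since the inserted prime lies among the top $s$ primes and the removed one does not yet) and is new to the current divisor. I expect this combinatorial argument — setting up the chain of swaps and verifying each link — to be the main (though routine) obstacle; the analytic content is entirely contained in Lemma \ref{pp1}. Finally, since each $b_s := p_sp_{s+1}\cdots p_r$ ($2\le s\le r$) is a proper divisor of $n$ and the above shows no proper divisor has smaller degree than the smallest among $\deg(b_2),\ldots,\deg(b_r)$, we conclude $\delta(\mathcal{P}(C_n)) = \min\{\deg(p_sp_{s+1}\cdots p_r): 2\le s\le r\}$.
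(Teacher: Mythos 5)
Your proposal is correct and follows essentially the same route as the paper: reduce an arbitrary proper divisor of $n$ to the tail product $p_{r-s+1}\cdots p_r$ by repeated single-prime swaps justified by Lemma \ref{pp1}, then minimize over the tail products. Your bookkeeping (processing prime factors from the largest index downward and inserting the largest unused prime, skipping trivial swaps) is exactly the chain of inequalities the paper writes down.
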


\begin{proof}
Let $\{k_1,k_2,\ldots,k_t\}$ be a proper subset of $\{1,2,\cdots,r\}$ with $k_1<k_2<\cdots <k_t$. Applying Lemma \ref{pp1} repeatedly, we get
\begin{align*}
\deg({p_{k_1}\cdots p_{k_{t-1}} p_{k_t}}) \geq \deg({p_{k_1}\cdots p_{k_{t-1}} p_r})\geq \cdots \geq \deg({p_{{r-t+1}}\cdots p_{r-1} p_r}).
\end{align*}
Since $\delta\left(\mathcal{P}(C_n)\right)$ is equal to the degree of a vertex which is a proper divisor of $n$, it follows from the above that $\delta(\mathcal{P}(C_n)) = \min \{\deg(p_sp_{s+1}\cdots p_r): 2 \leq s \leq r\}$.
\end{proof}

\begin{lemma}\label{ineq.cond}
Let $3 \leq s \leq r$. Then $\deg({p_{s-1}p_{s}\cdots p_r}) \geq \deg({p_{s}p_{s+1}\cdots p_r})$ if and only if
	\begin{align}\label{prod.p.ineq}
	p_{s}p_{s+1}\cdots p_r \geq \left( \frac{p_1p_2\cdots p_{s-2}}{\phi(p_1p_2\cdots p_{s-2})} - 1 \right) \phi(p_{s-1}) +1.
	\end{align}
Further, $\deg({p_{s-1}p_{s}\cdots p_r}) =\deg({p_{s}p_{s+1}\cdots p_r})$ if and only if equality holds in (\ref{prod.p.ineq}).
\end{lemma}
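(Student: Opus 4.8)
The plan is to compute the difference $\deg(p_{s-1}p_s\cdots p_r) - \deg(p_sp_{s+1}\cdots p_r)$ explicitly using the degree formula (\ref{eqn-2}) and then read off the stated equivalence. Write $a = p_{s-1}p_s\cdots p_r$ and $b = p_sp_{s+1}\cdots p_r$, so that $a = p_{s-1}b$ and $b$ is coprime to $p_{s-1}$. Since $n$ is squarefree, $\gcd(a,n)=a$ and $\gcd(b,n)=b$, so by (\ref{eqn-2}) we have $\deg(a) = \frac{n}{a} + \sum_{d\mid a,\, d\neq a}\phi(n/d) - 1$ and similarly for $b$. The first step is to handle the term $\frac{n}{a} - \frac{n}{b} = \frac{n}{b}\bigl(\frac{1}{p_{s-1}} - 1\bigr) = -\frac{n}{b}\cdot\frac{\phi(p_{s-1})}{p_{s-1}}$, and to note $\frac{n}{b} = p_1p_2\cdots p_{s-1}$, so this term equals $-p_1p_2\cdots p_{s-2}\,\phi(p_{s-1})$.

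Next I would compare the two divisor sums. Every divisor $d$ of $b$ is also a divisor of $a$; the divisors of $a$ not dividing $b$ are exactly those of the form $p_{s-1}d$ with $d\mid b$. Hence
$$\sum_{d\mid a,\, d\neq a}\phi\!\left(\frac{n}{d}\right) - \sum_{d\mid b,\, d\neq b}\phi\!\left(\frac{n}{d}\right) = \sum_{d\mid b,\, d\neq b}\phi\!\left(\frac{n}{p_{s-1}d}\right) + \Bigl(\text{correction for the }d=b\text{ terms}\Bigr).$$
Being careful with which top terms are excluded (the $d=a$ term on the left, the $d=b$ term on the right, and the extra $d = p_{s-1}b = a$ contribution that is the excluded one), one finds the difference of the sums collapses to $\sum_{d\mid b}\phi\!\left(\frac{n}{p_{s-1}d}\right) - \phi(n/b)$, and since $n/(p_{s-1}d)$ ranges over $\phi$-values summing (by (\ref{eqn-2-1}) applied to the divisor lattice of $n/(p_{s-1}b) = p_1\cdots p_{s-2}$, times the fixed factor $\phi(p_s\cdots p_r)/$ — more precisely using multiplicativity of $\phi$ and that $b$ is coprime to $p_1\cdots p_{s-1}$) we get $\sum_{d\mid b}\phi(n/(p_{s-1}d)) = \phi\!\left(\frac{n}{p_{s-1}b}\right)\sum_{e\mid b}\phi(b/e)\cdot(\dots)$; the cleanest route is to invoke Lemma \ref{phi.sum} with $x = n/p_{s-1} = p_1\cdots p_{s-2}p_s\cdots p_r$ and $y = b = p_s\cdots p_r$, which gives $\sum_{d\mid b}\phi(x/d) = p_1p_2\cdots p_{s-2}\cdot\phi(p_s\cdots p_r) = p_1\cdots p_{s-2}$ since $\phi(p_s\cdots p_r)$ — wait, more simply it yields $\bigl(\prod_{\alpha_i=\beta_i} p_i^{\alpha_i}\bigr) = p_1\cdots p_{s-2}$ times $\prod_{\alpha_i>\beta_i}(\cdots)$ over the $p_s,\dots,p_r$ slots, i.e. $p_s\cdots p_r - (\text{lower})$. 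Assembling the pieces, $\deg(a) - \deg(b)$ equals $-p_1\cdots p_{s-2}\phi(p_{s-1}) + p_1\cdots p_{s-2}\bigl(p_s\cdots p_r - 1\bigr) \cdot(\text{the bracketed }\phi\text{-correction})$; collecting terms and dividing through by the positive quantity $\phi(p_1p_2\cdots p_{s-2})$ rearranges exactly into the inequality $p_sp_{s+1}\cdots p_r \geq \bigl(\frac{p_1\cdots p_{s-2}}{\phi(p_1\cdots p_{s-2})} - 1\bigr)\phi(p_{s-1}) + 1$, with equality in one if and only if equality in the other.

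The main obstacle I anticipate is purely bookkeeping: correctly tracking which "top" divisors are omitted from each sum in (\ref{eqn-2}) when passing from $\sum_{d\mid a, d\neq a}$ to $\sum_{d\mid b}$ and back, since an off-by-one-term slip there would corrupt the final constant $+1$. To control this I would use the second form of (\ref{eqn-2}), namely $\deg(c) = \frac{n}{c} + \sum_{d\mid c}\phi(n/d) - \phi(n/c) - 1$, which makes the sums run over all divisors and isolates the correction terms $-\phi(n/a)$ and $-\phi(n/b)$ cleanly; then the bijection $d\mapsto p_{s-1}d$ between divisors of $b$ and the "new" divisors of $a$ is clean, Lemma \ref{phi.sum} evaluates the full sums directly, and the only algebra left is the factorisation of $p_s\cdots p_r - 1$ versus the left-hand side of (\ref{prod.p.ineq}). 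The strictness/equality clause then drops out for free since every step is an equality of integers except the final division by $\phi(p_1\cdots p_{s-2})>0$.
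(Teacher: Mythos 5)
Your overall strategy is the same as the paper's: write $a=p_{s-1}p_s\cdots p_r$, $b=p_sp_{s+1}\cdots p_r$, use the second form of (\ref{eqn-2}) so that the sums run over all divisors, evaluate the three pieces $\frac{n}{a}-\frac{n}{b}$, $\sum_{d\mid a}\phi(n/d)-\sum_{d\mid b}\phi(n/d)$ and $\phi(n/b)-\phi(n/a)$ exactly, and then rearrange and divide by $\phi(p_1\cdots p_{s-2})>0$. That is precisely how the paper proceeds (it evaluates the divisor-sum difference via multiplicativity of $\phi$ and (\ref{eqn-2-1}) rather than Lemma \ref{phi.sum}, but that is the same computation), arriving at
\begin{align*}
\deg(a)-\deg(b)=\phi(p_1\cdots p_{s-2})\,p_s\cdots p_r-\left[p_1\cdots p_{s-2}-\phi(p_1\cdots p_{s-2})\right]\phi(p_{s-1})-\phi(p_1\cdots p_{s-2}),
\end{align*}
from which both the inequality and the equality clause of the lemma are immediate.

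However, as written your execution has two concrete defects exactly where the constant in (\ref{prod.p.ineq}) is decided. First, your application of Lemma \ref{phi.sum} with $x=n/p_{s-1}$, $y=b$ has the two products interchanged: the primes $p_s,\ldots,p_r$ are the ones with $\alpha_i=\beta_i$ (contributing $p_s\cdots p_r$), while $p_1,\ldots,p_{s-2}$ have $\beta_i=0<\alpha_i$ (contributing $p_i-1$), so
\begin{align*}
\sum_{d\mid a}\phi\left(\frac{n}{d}\right)-\sum_{d\mid b}\phi\left(\frac{n}{d}\right)=\sum_{d\mid b}\phi\left(\frac{n}{p_{s-1}d}\right)=\phi(p_1\cdots p_{s-2})\,p_s\cdots p_r,
\end{align*}
and not $p_1\cdots p_{s-2}\cdot\phi(p_s\cdots p_r)$, nor ``$p_1\cdots p_{s-2}$ times a factor coming from the $p_s,\ldots,p_r$ slots''. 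Second, you never compute the remaining correction $\phi(n/b)-\phi(n/a)=\phi(p_1\cdots p_{s-2})\left(\phi(p_{s-1})-1\right)$; this is precisely the term that produces the $+1$ in (\ref{prod.p.ineq}) and the $-1$ inside the bracket, so your assembled expression $-p_1\cdots p_{s-2}\phi(p_{s-1})+p_1\cdots p_{s-2}(p_s\cdots p_r-1)\cdot(\text{bracketed correction})$ does not yet establish the stated if-and-only-if, only its general shape. With these two points repaired, your argument coincides with the paper's proof, and the equality statement does follow as you indicate, since every step is an identity of integers and the last step divides by the positive quantity $\phi(p_1\cdots p_{s-2})$.
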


\begin{proof}
We have $\dfrac{n}{p_{s-1}p_{s}\cdots p_r} - \dfrac{n}{p_{s}p_{s+1}\cdots p_r}=- p_1p_2\cdots p_{s-2} \phi(p_{s-1})$. Using (\ref{eqn-2-1}), an easy calculation gives that
$$\underset{d | (p_{s-1}p_{s}\cdots p_r)}\sum \phi \left( \frac{n}{d}\right) - \underset{d | (p_{s}p_{s+1}\cdots p_r)}\sum \phi \left( \frac{n}{d}\right)=\phi(p_1p_2\cdots p_{s-2}) p_{s}p_{s+1}\cdots p_r.$$
We also have
$$\phi\left(\dfrac{n}{p_{s}p_{s+1}\cdots p_r} \right)  - \phi\left(\dfrac{n}{p_{s-1}p_{s}\cdots p_r} \right)=\phi(p_1p_2\cdots p_{s-2} )(\phi(p_{s-1})-1).$$
Then, using the degree formula (\ref{eqn-2}), it follows that
\begin{align*}
 &	\deg  ({p_{s-1}p_{s}\cdots p_r}) - \deg({p_{s}\cdots p_r})\\
 & = - p_1p_2\cdots p_{s-2} \phi(p_{s-1})+ \phi(p_1p_2\cdots p_{s-2}) p_{s}p_{s+1}\cdots p_r + \phi(p_1p_2\cdots p_{s-2} )(\phi(p_{s-1})-1)\\
 & = \phi(p_1p_2\cdots p_{s-2}) p_{s}p_{s+1}\cdots p_r  - \left[p_1p_2\cdots p_{s-2} - \phi(p_1p_2\cdots p_{s-2} )\right] \phi(p_{s-1})- \phi(p_1p_2\cdots p_{s-2} ).
\end{align*}
Now it can be seen that $\deg({p_{s-1}p_{s}\cdots p_r}) \geq \deg({p_{s}p_{s+1}\cdots p_r})$ if and only if inequality (\ref{prod.p.ineq}) holds.
Also, $\deg({p_{s-1}p_{s}\cdots p_r}) =\deg({p_{s}p_{s+1}\cdots p_r})$ if and only if equality holds in (\ref{prod.p.ineq}).
\end{proof}

\begin{lemma}\label{higher-lower}
Let $3\leq s\leq r$. If $\deg (p_{s-1}p_{s}\cdots p_r) \geq \deg(p_{s}p_{s+1}\cdots p_r)$, then
$$\deg (p_{s-2}p_{s-1}\cdots p_r) > \deg(p_{s-1}p_{s}\cdots p_r).$$
\end{lemma}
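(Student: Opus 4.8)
The plan is to reduce the claimed strict inequality $\deg(p_{s-2}p_{s-1}\cdots p_r) > \deg(p_{s-1}p_s\cdots p_r)$ to the hypothesis $\deg(p_{s-1}p_s\cdots p_r)\geq \deg(p_sp_{s+1}\cdots p_r)$ by comparing the two \emph{difference} quantities produced by Lemma \ref{ineq.cond}. Write $A_j := \deg(p_{j}p_{j+1}\cdots p_r) - \deg(p_{j+1}p_{j+2}\cdots p_r)$ for the relevant indices, so the hypothesis is $A_{s-1}\geq 0$ and the goal is $A_{s-2}>0$. From the computation inside the proof of Lemma \ref{ineq.cond} we have the explicit closed form
$$A_{j}=\phi(p_1\cdots p_{j-1})\,p_{j+1}\cdots p_r-\bigl[p_1\cdots p_{j-1}-\phi(p_1\cdots p_{j-1})\bigr]\phi(p_{j})-\phi(p_1\cdots p_{j-1}),$$
valid for $3\le j+1$, i.e. $j\ge 2$; the case $j=2$ (where $p_1\cdots p_{j-1}=p_1$ and $\phi$ of it is $\phi(p_1)=p_1-1$) should be checked to fit this formula as well, since $s-2$ may equal $2$. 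So the first step is to record both $A_{s-2}$ and $A_{s-1}$ in this form.

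Next I would express $A_{s-2}$ in terms of $A_{s-1}$. Using $\phi(p_1\cdots p_{s-2})=\phi(p_1\cdots p_{s-3})\phi(p_{s-2})$ and $p_1\cdots p_{s-2}=p_1\cdots p_{s-3}\cdot p_{s-2}$, one can factor $\phi(p_1\cdots p_{s-3})$ out of the first term of $A_{s-1}$ and compare. Concretely, multiply the hypothesis $A_{s-1}\ge 0$ by the positive quantity $p_{s-1}$ (or by $\phi(p_{s-2})$, whichever matches cleanly) and then add the manifestly positive extra contributions that distinguish $A_{s-2}$ from a scaled copy of $A_{s-1}$: these extra terms come from (a) the jump $\phi(p_1\cdots p_{s-3})$ versus $\phi(p_1\cdots p_{s-2})$ in the leading product, and (b) the fact that the subtracted ``defect'' $p_1\cdots p_{s-3}-\phi(p_1\cdots p_{s-3})$ is strictly smaller, after the appropriate scaling, than $p_1\cdots p_{s-2}-\phi(p_1\cdots p_{s-2})$ relative to the gain. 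The strictness in the conclusion will come from the fact that at least one of these surplus terms is strictly positive (for instance $\phi(p_1\cdots p_{s-3})\ge 1$), so even if $A_{s-1}=0$ we get $A_{s-2}>0$.

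The main obstacle I anticipate is purely bookkeeping: getting the algebraic identity relating $A_{s-2}$ and $A_{s-1}$ to come out with all correction terms visibly nonnegative, and handling the boundary index $s-2=2$ where the ``$p_1\cdots p_{s-3}$'' factor degenerates to an empty product $1$ with $\phi(1)=1$. One clean way to avoid case analysis is to set $m:=p_1p_2\cdots p_{s-3}$ (with $m=1$ when $s=4$, noting $s\geq 4$ is forced here because $s\geq 3$ and $s-2\geq 2$), write $q:=p_{s-2}$, $q':=p_{s-1}$, $M:=p_sp_{s+1}\cdots p_r$, and recast $A_{s-1}=\phi(mq)M-[mq-\phi(mq)]\phi(q')-\phi(mq)$ and $A_{s-2}=\phi(m)q'M-[m-\phi(m)]\phi(q)-\phi(m)$; then the hypothesis gives $\phi(mq)M\ge [mq-\phi(mq)]\phi(q')+\phi(mq)$, hence $\phi(m)M\ge \bigl([mq-\phi(mq)]\phi(q')+\phi(mq)\bigr)/q$, and substituting this lower bound for $\phi(m)M$ (after multiplying $A_{s-2}$ through appropriately) reduces everything to the elementary inequality $q'\bigl([mq-\phi(mq)]\phi(q')+\phi(mq)\bigr)> q\bigl([m-\phi(m)]\phi(q)q+\phi(m)q\bigr)$, which follows from $q'>q$, $\phi(mq)=\phi(m)(q-1)$, and $\phi(q')=q'-1\ge \phi(q)$. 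I would carry out exactly this substitution-and-estimate, flagging the single strict step that secures the strict conclusion.
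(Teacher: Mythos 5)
Your core computation for $s\ge 4$ is sound and is essentially the paper's own route: both arguments convert the hypothesis and the goal into the explicit difference expressions computed in Lemma \ref{ineq.cond} (your $A_{s-1}\ge 0$ and $A_{s-2}>0$ correspond to the paper's inequalities (\ref{eq.mindeg2}) and (\ref{eq.mindeg3})) and then deduce the second from the first by elementary estimates using $\phi(p_{s-2})<p_{s-1}$ and the multiplicativity $\phi(mq)=\phi(m)\phi(q)$. Two slips in that part: the parenthetical ``$m=1$ when $s=4$'' is off by one (for $s=4$ one has $m=p_1\cdots p_{s-3}=p_1$; the empty product occurs at $s=3$), and your final displayed target carries a spurious extra factor of $q$ on the right-hand side --- what your substitution actually requires is $q'\bigl([mq-\phi(mq)]\phi(q')+\phi(mq)\bigr)\ge q\bigl([m-\phi(m)]\phi(q)+\phi(m)\bigr)$, the strictness of the conclusion already being supplied by $\phi(m)M\ge X/\phi(q)>X/q$. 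The stronger inequality you wrote does happen to hold, so this is repairable bookkeeping rather than a wrong idea.

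The genuine gap is the case $s=3$, which you exclude by asserting that ``$s\ge 4$ is forced because $s\ge 3$ and $s-2\ge 2$''. Nothing forces $s-2\ge 2$: the statement explicitly allows $s=3$, so a proof must cover it. When $s=3$ the conclusion concerns $\deg(p_{s-2}p_{s-1}\cdots p_r)=\deg(p_1p_2\cdots p_r)$, and $p_1p_2\cdots p_r=n$ is the identity element $0$ of $C_n$, not a proper divisor; Lemma \ref{ineq.cond} is stated only for $3\le s\le r$, so it yields your closed form for $A_j$ only when $j\ge 2$, and it cannot simply be cited for $A_1$. This case needs (and only needs) the separate one-line observation with which the paper opens its proof: $\deg(p_1p_2\cdots p_r)=\deg(0)=n-1>\phi(p_1)\,p_2p_3\cdots p_r=\deg(p_2p_3\cdots p_r)$, which holds unconditionally. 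With that sentence added and the bookkeeping above corrected, your argument is complete and agrees in substance with the paper's proof.
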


\begin{proof}
If $s=3$, then $\deg(p_1p_2\cdots p_r)=\deg(n)=\deg(0)=n-1>\dfrac{n\phi(p_1)}{p_1}=\deg(p_2p_3\cdots p_r)$ and the lemma follows.
Assume that $4\leq s\leq r$. Observe that the inequality (\ref{prod.p.ineq}) in the statement of Lemma \ref{ineq.cond} is equivalent to the following inequality:
\begin{align}\label{eq.mindeg2}
	\phi(p_1p_2\cdots p_{s-2}) (p_{s}p_{s+1}\cdots p_{r}-1) \geq \phi(p_{s-1}) \left[p_1p_2\cdots p_{s-2} - \phi(p_1p_2\cdots p_{s-2})\right].
	\end{align}	

Since $\deg (p_{s-1}p_{s}\cdots p_r) \geq \deg(p_{s}p_{s+1}\cdots p_r)$ by the given hypothesis, Lemma \ref{ineq.cond} then implies that the inequality (\ref{eq.mindeg2}) holds. We need to show that
$$\deg (p_{s-2}p_{s-1}\cdots p_r) > \deg(p_{s-1}p_{s}\cdots p_r).$$
By Lemma \ref{ineq.cond} again, it is enough to show that
	\begin{align}\label{eq.mindeg3}
	\phi(p_1p_2\cdots p_{s-3}) (p_{s-1}p_{s}\cdots p_{r}-1) > \phi(p_{s-2}) \left[p_1p_2\cdots p_{s-3}  -  \phi(p_1p_2\cdots p_{s-3})\right].
	\end{align}
Since $\phi(p_{s-2})< p_{s-1}$, we have	
	\begin{align}\label{eq.mindeg4}
	\phi(p_1p_2\cdots p_{s-2}) (p_{s}p_{s+1}\cdots p_{r}-1) & < \phi(p_1p_2\cdots p_{s-3})p_{s-1} (p_{s}p_{s+1}\cdots p_{r}-1) \nonumber\\
	& < \phi(p_1p_2\cdots p_{s-3}) (p_{s-1}p_s\cdots p_r- 1).
	\end{align}	
Moreover,	
	\begin{align}\label{eq.mindeg5}
   \phi(p_{s-1}) \left[p_1p_2\cdots p_{s-2} - \phi(p_1p_2\cdots p_{s-2})\right]
    &>  p_1p_2\cdots p_{s-3}p_{s-2} - \phi(p_1p_2\cdots p_{s-2}) \nonumber \\
    & =  p_1p_2\cdots p_{s-3} \left[\phi(p_{s-2})+1\right] - \phi(p_1p_2\cdots p_{s-2}) \nonumber \\
    & >  \phi(p_{s-2}) \left[p_1p_2\cdots p_{s-3} - \phi(p_1p_2\cdots p_{s-3})\right].
	\end{align}
Now (\ref{eq.mindeg3}) follows from the inequalities (\ref{eq.mindeg2}), (\ref{eq.mindeg4}) and (\ref{eq.mindeg5}).
\end{proof}
\medskip

\begin{proof}[{\bf Proof of Theorem \ref{mindeg.main}}]
If $r=3$, then $\delta(\mathcal{P}(C_n)) = \min \{\deg(p_2p_3),\deg({p_3})\}$ by Lemma \ref{pp2}. Assume that $r\geq 4$. By Lemma \ref{prime.ineq}(i), we have $(r-2)\phi(p_1p_2\cdots p_{r-3})\geq p_1p_2\cdots p_{r-3}$, and this gives
$$r-3\geq \frac{p_1p_2\cdots p_{r-3}}{\phi(p_1p_2\cdots p_{r-3})} - 1.$$
Then
$$p_{r-1}p_r >(r-2)\phi(p_{r-2})\geq (r-3)\phi(p_{r-2}) +1\geq \left( \dfrac{p_1p_2\cdots p_{r-3}}{\phi(p_1p_2\cdots p_{r-3})} - 1\right) \phi(p_{r-2}) +1.$$ \\
So inequality (\ref{prod.p.ineq}) is satisfied with $s=r-1$ and hence $\deg({p_{r-2}p_{r-1}p_r}) > \deg({p_{r-1}p_r})$ by Lemma \ref{ineq.cond}. Then, using Lemma \ref{higher-lower} repeatedly, we have
$$\deg(p_2p_3\cdots p_r)>\deg(p_3p_4\cdots p_r)>\cdots >\deg({p_{r-2}p_{r-1}p_r}) > \deg({p_{r-1}p_r}).$$
Therefore, by Lemma \ref{pp2}, we get
$$\delta(\mathcal{P}(C_n)) = \min\{\deg(p_{r-1}p_r), \deg(p_r)\}.$$
By Lemma \ref{ineq.cond}, $\delta(\mathcal{P}(C_n)) = \deg(p_r)$ if and only if $\phi(p_r) \geq \left( \dfrac{p_1p_2\cdots p_{r-2}}{\phi(p_1p_2\cdots p_{r-2})} - 1\right) \phi(p_{r-1})$.

Now suppose that $\phi(p_{r}) \geq (r-2) \phi(p_{r-1})$. Since $(r-1)\phi(p_1p_2\cdots p_{r-2})\geq p_1p_2\cdots p_{r-2}$ by Lemma \ref{prime.ineq}(i), we have
$$r-2\geq \frac{p_1p_2\cdots p_{r-2}}{\phi(p_1p_2\cdots p_{r-2})} - 1.$$
Therefore,
$$p_r\geq (r-2) \phi(p_{r-1})+1\geq \left(\frac{p_1p_2\ldots p_{r-2}}{\phi(p_1p_2\ldots p_{r-2})}-1 \right) \phi(p_{r-1})+1.$$
Then $\deg(p_{r-1}p_r)\geq \deg (p_r)$ by Lemma \ref{ineq.cond} and hence $\delta(\mathcal{P}(C_n)) = \deg(p_r)$. This completes the proof of Theorem \ref{mindeg.main}.
\end{proof}

\begin{example}
The following examples shows that all possibilities can occur in Theorem \ref{mindeg.main} for the minimum degree of $\mathcal{P}(C_n)$.
\begin{enumerate}[(i)]
\item If $n = 2 \cdot 13 \cdot 17 \cdot 19$, then $\delta(\mathcal{P}(C_n))=\deg(17 \cdot 19)< \deg(19)$.
\item If $n = 2 \cdot 13 \cdot 17 \cdot 23$, then $\delta(\mathcal{P}(C_n))=\deg(23)<\deg(17\cdot 23)$.
\item If $n=2\cdot 5\cdot 13\cdot 19$, then $\delta(\mathcal{P}(C_n))=\deg(13\cdot 19)=\deg(19)$.
\end{enumerate}
Note that if $n=2\cdot 3\cdot p_3\cdot p_4$ with $p_4=2p_3 -1$, then $\deg(p_{3}p_4)=\deg(p_4)$.
\end{example}

\section{Proof of Thereom \ref{thm.mindeg} and Corolary \ref{exact.mindeg}}	\label{general-n}

In this section, we take $n=p_1^{\alpha_1}p_2^{\alpha_2} \cdots p_r^{\alpha_r}$, where $r \geq 2$, $\alpha_1,\alpha_2,\ldots, \alpha_r$ are positive integers and $p_1,p_2,\ldots, p_r$ are prime numbers with $p_1<p_2<\cdots <p_r$.

\begin{proposition}\label{prop1}
Let $m=p_{k_1}^{\beta_{k_1}}p_{k_2}^{\beta_{k_2}}\cdots p_{k_s}^{\beta_{k_s}}$, where $2 \leq s \leq r$, $k_1<k_2<\cdots < k_s$ and $1\leq \beta_{k_i}\leq \alpha_{k_i}$ for $1\leq i\leq s$.
Suppose that one of the following two conditions holds:
\begin{enumerate}[\rm(i)]
\item $2\phi(p_{1} p_2\cdots p_{r}) \geq p_1p_2\cdots p_r$,
\item $\phi(p_{j+1}) \geq r \phi(p_j)$ for each $j\in \{1,2,\ldots, r-1\}$.
\end{enumerate}
Then $\deg(m) > \deg\left(\dfrac{m}{p_{k_i}} \right)$ for $i\in\{1,2,\ldots, s-1\}$ in $\mathcal{P}(C_n)$.
\end{proposition}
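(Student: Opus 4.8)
The plan is to compute $\deg(m)-\deg\bigl(m/p_{k_i}\bigr)$ exactly from the degree formula (\ref{eqn-2}) and then to reduce the claim to one elementary inequality that can be verified under either hypothesis. Fix $i\in\{1,\dots,s-1\}$, abbreviate $p=p_{k_i}$ and $\beta=\beta_{k_i}$, and set $m'=m/p$ and $L=m/p^{\beta}=\prod_{j\neq i}p_{k_j}^{\beta_{k_j}}$, the $p$-free part of $m$. Since $i\le s-1$, the factor $p_{k_s}^{\beta_{k_s}}$ appears in $L$, so $L\ge p_{k_s}^{\beta_{k_s}}\ge p_{k_s}\ge p_{k_i+1}>p$; this elementary observation is what ultimately powers the argument. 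The first step is the difference identity
\[
\deg(m)-\deg(m')\;=\;-(p-1)\,\frac{n}{m}\;+\;\phi\!\left(\frac{n}{m'}\right)\;+\;\sum_{\substack{e\mid L\\ e<L}}\phi\!\left(\frac{n}{p^{\beta}e}\right),
\]
which I would derive by applying (\ref{eqn-2}) to the two divisors $m$ and $m'$ of $n$: the divisors of $m$ that do not divide $m'$ are precisely the integers $p^{\beta}e$ with $e\mid L$, and the summand $e=L$ contributes $\phi(n/m)$, cancelling the $-\phi(n/m)$ produced by the $-\phi(n/a)$ correction term in (\ref{eqn-2}).

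The second step is to bound the two nonnegative summands below. Using supermultiplicativity $\phi(ab)\ge\phi(a)\phi(b)$, the identity $\sum_{d\mid N}\phi(d)=N$ from (\ref{eqn-2-1}), and the substitution $n/(p^{\beta}e)=(n/m)(L/e)$ with $c=L/e$ running over the divisors of $L$ exceeding $1$, one obtains
\[
\sum_{\substack{e\mid L\\ e<L}}\phi\!\left(\frac{n}{p^{\beta}e}\right)\;\ge\;\phi\!\left(\frac{n}{m}\right)\sum_{\substack{c\mid L\\ c>1}}\phi(c)\;=\;(L-1)\,\phi\!\left(\frac{n}{m}\right),
\]
and also $\phi(n/m')=\phi\!\bigl(p\cdot\tfrac{n}{m}\bigr)\ge(p-1)\,\phi(n/m)$, improved to $p\,\phi(n/m)$ exactly when $p\mid n/m$, i.e.\ when $\beta<\alpha_{k_i}$. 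Substituting into the identity, it suffices to prove $L+p-2>(p-1)\,\rho$ in general, and only the weaker inequality $L+p-1>(p-1)\,\rho$ when $\beta<\alpha_{k_i}$, where
\[
\rho\;:=\;\frac{n/m}{\phi(n/m)}\;=\;\prod_{j\,:\,\alpha_j>\beta_j}\frac{p_j}{p_j-1}\;\le\;\prod_{j=1}^{r}\frac{p_j}{p_j-1}\;=\;\frac{p_1p_2\cdots p_r}{\phi(p_1p_2\cdots p_r)}
\]
(with the convention $\beta_j=0$ for indices $j$ not appearing in $m$).

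The final step bounds $\rho$ and concludes. Under hypothesis (i) we have $\rho\le 2$ directly, so $L+p-2\ge p_{k_i+1}+p-2\ge 2p-1>2(p-1)\ge(p-1)\rho$, which disposes of that case. Under hypothesis (ii), Lemma \ref{prime.ineq}(i) with $t=r$ gives $\rho\le r+1$, while (ii) yields $p_{k_i+1}-1=\phi(p_{k_i+1})\ge r\phi(p)=r(p-1)$, hence $L\ge r(p-1)+1$ and therefore $L+p-2\ge(r+1)(p-1)$. If $\rho<r+1$ this already gives $L+p-2\ge(r+1)(p-1)>(p-1)\rho$. If $\rho=r+1$, then $\prod_{j:\alpha_j>\beta_j}\frac{p_j}{p_j-1}$ must equal the full product over all $r$ primes, so every $p_j$ divides $n/m$; in particular $p\mid n/m$, so $\phi(n/m')=p\,\phi(n/m)$, and then
\[
\deg(m)-\deg(m')\;\ge\;(L+p-1)\,\phi(n/m)-(p-1)\tfrac{n}{m}\;=\;\phi(n/m)\bigl[L+p-1-(p-1)(r+1)\bigr]\;\ge\;\phi(n/m)\;>\;0 .
\]
I expect the main obstacle to be precisely this equality case $\rho=r+1$ under (ii): there the crude estimates are exactly tight, and one must notice both that the equality forces $p\mid n/m$ and that this in turn upgrades $\phi(n/m')$ to $p\,\phi(n/m)$, supplying the extra unit needed for a strict inequality. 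The rest — the difference identity and the two lower bounds — is routine.
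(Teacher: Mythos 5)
Your proof is correct, and it reaches the conclusion by a leaner route than the paper's. The overall strategy is the same in spirit (estimate $\deg(m)-\deg(m/p_{k_i})$ from the degree formula and settle the resulting inequality with Lemma \ref{prime.ineq}(i)), but the bookkeeping is genuinely different. The paper evaluates the divisor-sum difference \emph{exactly} via Lemma \ref{phi.sum}, factors out $\phi(n')\prod\phi\bigl(p_{k_j}^{\alpha_{k_j}-\beta_{k_j}}\bigr)$, and reduces to $\bigl(\phi(p_{k_s})+\phi(p_{k_i})\bigr)\phi(p_1\cdots p_r)>\phi(p_{k_i})\,p_1\cdots p_r$, obtaining strictness by tracking exactly where equality can hold in its chain (the two equality conditions $\alpha_{k_i}=\beta_{k_i}$ and $\alpha_{k_j}>\beta_{k_j}$ for all $j$ are incompatible). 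You instead bypass Lemma \ref{phi.sum} entirely: the crude but sufficient bound $\sum_{c\mid L,\,c>1}\phi(c)=L-1$ (via supermultiplicativity $\phi(ab)\ge\phi(a)\phi(b)$, which is not stated in the paper but is standard and worth a one-line justification, e.g. $\phi(ab)=\phi(a)\phi(b)\,d/\phi(d)$ with $d=\gcd(a,b)$) reduces everything to the single scalar inequality $L+p-2>(p-1)\rho$ with $\rho=(n/m)/\phi(n/m)$, and the role the paper assigns to $\phi(p_{k_s})$ is played by $L\ge p_{k_s}\ge p_{k_i+1}$, using hypothesis (ii) only at $j=k_i$. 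The price of the cruder bound is that under (ii) the estimate is exactly tight, and you correctly identify and resolve the delicate point: $\rho=r+1$ forces every $p_j$ (in particular $p_{k_i}$) to divide $n/m$, which upgrades $\phi(n/m')$ from $(p-1)\phi(n/m)$ to $p\,\phi(n/m)$ and supplies the strict inequality. So your argument buys a shorter computation and a cleaner final reduction, at the cost of an explicit equality-case analysis that the paper handles instead through its strictness tracking; both are complete proofs of the proposition.
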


\begin{proof}
Fix $i\in\{1,2,\ldots, s-1\}$. Using the degree formula (\ref{eqn-2}), we have
$$\xi:= \deg(m) - \deg\left(\frac{m}{p_{k_i}} \right) = \frac{n}{m} - \frac{np_{k_i}}{m} +\theta =\theta - \frac{n}{m} \phi(p_{k_i}),$$
where
$$\theta:= \sum_{\substack{d | m}} \phi \left( \frac{n}{d}\right)  - \sum_{\substack{d \big\vert \frac{m}{p_{k_i}}}} \phi \left( \frac{n}{d}\right)+\phi\left(\dfrac{np_{k_i}}{m} \right) - \phi\left(\dfrac{n}{m}\right).$$
First calculate $\sum_{\substack{d | m}} \phi \left( \frac{n}{d}\right)  - \sum_{\substack{d \big\vert \frac{m}{p_{k_i}}}} \phi \left( \frac{n}{d}\right)$. Define $ n':= \dfrac{n}{p_{k_1}^{\alpha_{k_1}}p_{k_2}^{\alpha_{k_2}}\cdots p_{k_s}^{\alpha_{k_s}}} \times {p_{k_i}^{\alpha_{k_i} - \beta_{k_i}}}$. Then
$$ \sum_{d | m} \phi \left( \frac{n}{d}\right)  - \sum_{d \big\vert \frac{m}{p_{k_i}}} \phi \left( \frac{n}{d}\right) =  \sum_{d \big\vert \frac{m}{p_{k_i}^{\beta_{k_i}}}} \phi \left( \frac{n}{dp_{k_i}^{\beta_{k_i}}}\right)
 = \phi \left( n'\right) \times \left(\sum_{d \big\vert \frac{m}{p_{k_i}^{\beta_{k_i}}}} \phi \left( \frac{p_{k_1}^{\alpha_{k_1}}p_{k_2}^{\alpha_{k_2}}\cdots p_{k_s}^{\alpha_{k_s}}}{dp_{k_i}^{\alpha_{k_i}}}\right)\right).$$
Taking $x=\dfrac{p_{k_1}^{\alpha_{k_1}}p_{k_2}^{\alpha_{k_2}}\cdots p_{k_s}^{\alpha_{k_s}}}{p_{k_i}^{\alpha_{k_i}}}$ and $y=\dfrac{p_{k_1}^{\beta_{k_1}}p_{k_2}^{\beta_{k_2}}\cdots p_{k_s}^{\beta_{k_s}}}{p_{k_i}^{\beta_{k_i}}}=\dfrac{m}{p_{k_i}^{\beta_{k_i}}}$ in Lemma \ref{phi.sum}, we get
\begin{align}\label{align-num-1}
\sum_{d | m} \phi \left( \frac{n}{d}\right)  - \sum_{d \big\vert \frac{m}{p_{k_i}}} \phi \left( \frac{n}{d}\right)
 & =\phi \left( n'\right) \left(\prod^{s}_{\substack{j=1\\ j\neq i\\ \alpha_{k_j} = \beta_{k_j}}} p_{k_j}^{\alpha_{k_j}} \right)  \left(\prod^{s}_{\substack{j=1\\ j\neq i\\ \alpha_{k_j} > \beta_{k_j}}} \left( p_{k_j}^{\alpha_{k_j}} - p_{k_j}^{\alpha_{k_j} - \beta_{k_j} -1} \right) \right) \nonumber \\
 & = \phi \left( n'\right) \left(\prod^{s}_{\substack{j=1\\ j\neq i\\ \alpha_{k_j} = \beta_{k_j}}} p_{k_j}^{\alpha_{k_j}} \right) \left(\prod^{s}_{\substack{j=1\\ j\neq i\\ \alpha_{k_j} > \beta_{k_j}}}  p_{k_j}^{\alpha_{k_j} - \beta_{k_j} -1} \left(p_{k_j}^{\beta_{k_j} +1} -1\right) \right).
\end{align}
Next calculate $\phi \left( \dfrac{np_{k_i}}{m}\right) - \phi \left(\dfrac{n}{m}\right)$. We have
\begin{align} \label{align-num-2}
\phi \left( \frac{np_{k_i}}{m}\right) - \phi \left(\frac{n}{m}\right) & = \phi \left( \frac{n}{p_{k_1}^{\alpha_{k_1}}\cdots p_{k_s}^{\alpha_{k_s}}}\right) \phi \left( \frac{p_{k_1}^{\alpha_{k_1} - \beta_{k_1}} \cdots p_{k_s}^{\alpha_{k_s} - \beta_{k_s}}}{p_{k_i}^{\alpha_{k_i}-\beta_{k_i}}}\right)\nonumber \\
 & \qquad \times \left[\phi\left(p_{k_i}^{\alpha_{k_i}-\beta_{k_i}+1}\right)-\phi\left(p_{k_i}^{\alpha_{k_i}-\beta_{k_i}}\right)\right] \nonumber \\
 & \geq \phi \left( \frac{n}{p_{k_1}^{\alpha_{k_1}}\cdots p_{k_s}^{\alpha_{k_s}}}\right) \phi \left( \frac{p_{k_1}^{\alpha_{k_1} - \beta_{k_1}} \cdots p_{k_s}^{\alpha_{k_s} - \beta_{k_s}}}{p_{k_i}^{\alpha_{k_i}-\beta_{k_i}}}\right) \phi\left(p_{k_i}^{\alpha_{k_i}-\beta_{k_i}}\right) \left(p_{k_i}-2\right) \nonumber \\
& = \phi\left(n'\right) \phi \left( \frac{p_{k_1}^{\alpha_{k_1} - \beta_{k_1}} \cdots p_{k_s}^{\alpha_{k_s} - \beta_{k_s}}}{p_{k_i}^{\alpha_{k_i}-\beta_{k_i}}}\right) \left(p_{k_i}-2\right) \nonumber \\
& =\phi\left(n'\right) \left(\prod^{s}_{\substack{j=1\\ j\neq i\\ \alpha_{k_j} > \beta_{k_j}}}  p_{k_j}^{\alpha_{k_j} - \beta_{k_j} -1}\phi\left(p_{k_j}\right)\right) \left(p_{k_i}-2\right),
\end{align}
where equality in the above holds if and only if $\alpha_{k_i}=\beta_{k_i}$. Using (\ref{align-num-1}) and (\ref{align-num-2}), we get
\begin{align}
\theta & \geq \phi \left( n'\right) \left(\prod^{s}_{\substack{j=1\\ j\neq i\\ \alpha_{k_j} > \beta_{k_j}}}  p_{k_j}^{\alpha_{k_j} - \beta_{k_j} -1}\right) \nonumber\\
& \qquad \times \left[\left(\prod^{s}_{\substack{j=1\\ j\neq i\\ \alpha_{k_j} = \beta_{k_j}}} p_{k_j}^{\alpha_{k_j}} \right)  \left(\prod^{s}_{\substack{j=1\\ j\neq i\\ \alpha_{k_j} > \beta_{k_j}}} \left(p_{k_j}^{\beta_{k_j} +1} -1\right) \right)  + \left( \prod^{s}_{\substack{j=1\\ j\neq i\\ \alpha_{k_j} > \beta_{k_j}}}  \phi\left(p_{k_j}\right)\right) \left(p_{k_i}-2\right) \right] \label{align-num-3}\\
& = \phi \left( n'\right) \left(\prod^{s}_{\substack{j=1\\ j\neq i\\ \alpha_{k_j} > \beta_{k_j}}}  p_{k_j}^{\alpha_{k_j} - \beta_{k_j} -1}\right)\times \left( \prod^{s}_{\substack{j=1\\ j\neq i\\ \alpha_{k_j} > \beta_{k_j}}}  \phi\left(p_{k_j}\right)\right) \nonumber\\
& \qquad \times \left[\left(\prod^{s}_{\substack{j=1\\ j\neq i\\ \alpha_{k_j} = \beta_{k_j}}} p_{k_j}^{\alpha_{k_j}} \right)  \left(\prod^{s}_{\substack{j=1\\ j\neq i\\ \alpha_{k_j} > \beta_{k_j}}} \left(p_{k_j}^{\beta_{k_j}} +\cdots + p_{k_j} + 1\right) \right)  + \left(p_{k_i}-2\right) \right] \nonumber\\
& = \phi \left( n'\right) \left(\prod^{s}_{\substack{j=1\\ j\neq i\\ \alpha_{k_j} > \beta_{k_j}}}  \phi\left(p_{k_j}^{\alpha_{k_j} - \beta_{k_j}}\right)\right) \nonumber \\
& \qquad \times \left[  \left(\prod^{s}_{\substack{j=1\\ j\neq i\\ \alpha_{k_j} = \beta_{k_j}}} p_{k_j}^{\alpha_{k_j}} \right)  \left(\prod^{s}_{\substack{j=1\\ j\neq i\\ \alpha_{k_j} > \beta_{k_j}}} \left(p_{k_j}^{\beta_{k_j}} +\cdots + p_{k_j} + 1\right) \right) + \phi(p_{k_i})-1 \right] \nonumber \\
& \geq \phi \left( n'\right) \left(\prod^{s}_{\substack{j=1\\ j\neq i\\ \alpha_{k_j} > \beta_{k_j}}}  \phi\left(p_{k_j}^{\alpha_{k_j} - \beta_{k_j}}\right)\right) \times \left[\phi\left(p_{k_s}\right) + \phi\left(p_{k_i}\right)\right] \nonumber\\
& \geq \frac{1}{m}\times \left[p_1^{\alpha_1-1}p_2^{\alpha_2-1}\cdots p_r^{\alpha_r-1} \phi(p_1p_2\cdots p_r) \times \left[\phi\left(p_{k_s}\right) + \phi\left(p_{k_i}\right)\right] \right] . \label{align-num-4}
\end{align}
Note that equality holds in (\ref{align-num-3}) if and only if $\alpha_{k_i}=\beta_{k_i}$, which follows from (\ref{align-num-2}). It can be seen that equality holds in (\ref{align-num-4}) if and only if $\alpha_{k_j}>\beta_{k_j}$ for all $j\in\{1,2,\ldots, s\}$. Combining these two facts, we thus have
\begin{align}
\theta & >  \frac{1}{m}\times \left[p_1^{\alpha_1-1}p_2^{\alpha_2-1}\cdots p_r^{\alpha_r-1} \phi(p_1p_2\cdots p_r) \times \left[\phi\left(p_{k_s}\right) + \phi\left(p_{k_i}\right)\right] \right].\nonumber
\end{align}
Finally, we get
\begin{align}\label{align-num-6}
\xi & > \frac{1}{m}\left[p_1^{\alpha_1-1}\cdots p_r^{\alpha_r-1} \phi(p_1p_2\cdots p_r) \times \left[\phi\left(p_{k_s}\right) + \phi\left(p_{k_i}\right)\right] \right] - \frac{n}{m} \phi(p_{k_i}) \nonumber \\
& = \frac{p_1^{\alpha_1-1}\cdots p_r^{\alpha_r-1}}{m} \left[\left[\phi\left(p_{k_s}\right) + \phi\left(p_{k_i}\right)\right] \phi(p_1p_2\ldots p_r) - \phi\left(p_{k_i}\right) p_1p_2\cdots p_r \right].
\end{align}
Since $k_s >k_i$, we have $p_{k_s}>p_{k_i}$. So $\phi\left(p_{k_s}\right) > \phi\left(p_{k_i}\right)$ and hence $\phi\left(p_{k_s}\right) + \phi\left(p_{k_i}\right)>2\phi\left(p_{k_i}\right)$. If $2\phi(p_{1} p_2\cdots p_{r}) \geq p_1p_2\cdots p_r$, then it follows from (\ref{align-num-6}) that $\deg(m) - \deg\left(\frac{m}{p_{k_i}} \right)>0$.
If $\phi(p_{j+1}) \geq r \phi(p_j)$ for each $j\in \{1,2,\ldots, r-1\}$, then $\phi(p_{k_s})\geq r\phi(p_{k_i})$ and so $\phi\left(p_{k_s}\right) + \phi\left(p_{k_i}\right)\geq (r+1)\phi\left(p_{k_i}\right)$. Using Lemma \ref{prime.ineq}(i), it again follows from (\ref{align-num-6}) that $\deg(m) - \deg\left(\frac{m}{p_{k_i}} \right)>0$. This completes the proof.
\end{proof}

\begin{proof}[{\bf Proof of Theorem \ref{thm.mindeg}}]
Let $m$ be a proper divisor of $n$. We can write $m=p_{k_1}^{\beta_{k_1}}p_{k_2}^{\beta_{k_2}}\cdots p_{k_s}^{\beta_{k_s}}$ for some $s\in\{1,2,\ldots,r\}$, where $k_1<k_2<\cdots < k_s$ and $1\leq \beta_{k_i}\leq \alpha_{k_i}$ for $1\leq i\leq s$. If $s=1$, then $\deg(m)=\deg\left(p_{k_1}^{\beta_{k_1}}\right)\geq \deg\left(p_{k_1}^{\alpha_{k_1}}\right)$ by Proposition \ref{degcompare}(ii). So assume that $s\geq 2$. Then applying Proposition \ref{prop1} repeatedly, we find that $\deg(m)>\deg\left(p_{k_s}^{\beta_{k_s}}\right)\geq \deg\left(p_{k_s}^{\alpha_{k_s}}\right)$. Here the last inequality holds again by Proposition \ref{degcompare}(ii). Thus
$$\delta(\mathcal{P}(C_n))=\min\{\deg\left(p_{i}^{\alpha_{i}}\right): 1\leq i\leq r\}.$$
By Proposition \ref{degcompare}(i), we have $\deg\left(p_{1}^{\alpha_{1}}\right)> \deg\left(p_{r}^{\alpha_{r}}\right)$. Let $t\in\{2,3,\ldots, r\}$ be the largest integer such that $\alpha_t\geq \alpha_j$ for $2\leq j\leq r$. Then, by Proposition \ref{degcompare}(ii) and (iii), we have $\deg\left(p_{j}^{\alpha_{j}}\right)> \deg\left(p_{t}^{\alpha_{t}}\right)$ for $2\leq j\leq t-1$ (if $t\geq 3$). It now follows that
$$\delta(\mathcal{P}(C_n)) = \min\{\deg\left(p_s^{\alpha_s}\right):t \leq s \leq r\}.$$
This completes the proof.	
\end{proof}

\begin{example}
Let $n=2\cdot 3 \cdot 5\cdot 11$. Then the minimum degree of $\mathcal{P}(C_n)$ is equal to $\deg(11)$ by Theorem \ref{mindeg.main}, but none of the two conditions mentioned in Theorem \ref{thm.mindeg} is satisfied. Thus each of the two conditions stipulated in  Theorem \ref{thm.mindeg} is sufficient but not necessary.
\end{example}

\begin{example}
Let $n=2^2\cdot 7 \cdot 11 \cdot 13$. By Proposition \ref{degcompare}(iii), we have $\deg(13)<\deg(11)<\deg(7)$. Using the degree formula (\ref{eqn-2}), it can be seen that $\deg(11\cdot 13)<\deg(13)$. This shows that if none of the two conditions stated in Theorem \ref{thm.mindeg} is satisfied, then the minimum of degree of $\mathcal{P}(C_n)$ may not be equal to the degree of $p_i^{\alpha_i}$ for any $i\in\{2,3,\ldots,r\}$.
\end{example}

\begin{proof}[{\bf Proof of Corollary \ref{exact.mindeg}}]
If $p_1\geq r+1$, then $2\phi(p_1p_2\cdots p_r)\geq p_1p_2\cdots p_r$ by Lemma \ref{prime.ineq}(ii). If $p_{i+1} > rp_{i}$ for each $i\in\{1,2,\ldots,r-1\}$, then $\phi(p_{i+1})=p_{i+1} -1 > rp_{i} -1> rp_{i}-r= r \phi(p_i)$ for $1\leq i\leq r-1$. So, by Theorem \ref{thm.mindeg}, the minimum degree of  $\mathcal{P}(C_n)$ is equal to $\deg\left({p_i^{\alpha_i}}\right)$ for some $i\in\{2,3,\ldots,r\}$ and the result follows for $r=2$.
Assume that $r\geq 3$. For $2\leq i < r$, let $m=\dfrac{n}{p_i^{\alpha_i}p_r^{\alpha_r}}$.
By Lemma \ref{prime.ineq}(ii), $\phi\left(\dfrac{p_1 p_2\cdots p_r}{p_i} \right)\geq \dfrac{p_1p_2 \cdots p_r}{rp_i}$ and so
\begin{align}\label{align-num-7}
p_i^{\alpha_i-1} \left[(p_r-1)\phi(m) - p_im\right] & = p_1^{\alpha_1-1} \cdots p_{r-1}^{\alpha_{r-1}-1} \left[\phi\left(\frac{p_1 p_2\cdots p_r}{p_i} \right) - {p_1 p_2\cdots p_{r-1}} \right] \nonumber\\
& \geq p_1^{\alpha_1-1} \cdots p_{r-1}^{\alpha_{r-1}-1} \left[\frac{p_1p_2 \cdots p_r}{rp_i}  - {p_1p_2 \cdots p_{r-1}} \right] \nonumber \\
& = \frac{ p_1^{\alpha_1} \cdots p_{r-1}^{\alpha_{r-1}}}{rp_i} (p_r - rp_i).
\end{align}
From the given conditions in both cases, we have $p_r> rp_{r-1}\geq rp_i$ and then the result follows from (\ref{eqn-5}) and (\ref{align-num-7}).
\end{proof}

\section{Proof of Theorem \ref{mindeg.3prime}}\label{three-primes}

In this section, take $n=p_1^{\alpha_1}p_2^{\alpha_2}p_3^{\alpha_3}$, where $\alpha_1,\alpha_2,\alpha_3$ are positive integers and $p_1,p_2,p_3$ are prime numbers with $p_1<p_2<p_3$.

\begin{lemma}\label{lem.beta2}
Let $i,j\in\{1,2,3\}$ with $i< j$. If $1\leq \beta_i\leq\alpha_i$, then $\deg\left(p_{i}^{\beta_i}p_{j}^{\beta_j}\right) > \deg\left( p_{i}^{\beta_i-1}p_{j}^{\beta_j} \right)$ for $\beta_j \geq 2$.
\end{lemma}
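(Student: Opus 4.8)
The plan is to imitate the computation in the proof of Proposition \ref{prop1}: compute the difference $\deg\left(p_i^{\beta_i}p_j^{\beta_j}\right)-\deg\left(p_i^{\beta_i-1}p_j^{\beta_j}\right)$ directly from the degree formula (\ref{eqn-2}) and show it is positive, now using the hypothesis $\beta_j\geq2$ in place of the global conditions on the primes assumed there. Let $k$ be the remaining index in $\{1,2,3\}\setminus\{i,j\}$, and put $m=p_i^{\beta_i}p_j^{\beta_j}$ and $m'=m/p_i=p_i^{\beta_i-1}p_j^{\beta_j}$; since by (\ref{eqn-2}) the degree of a vertex depends only on its greatest common divisor with $n$, we may assume $\beta_j\leq\alpha_j$. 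Applying (\ref{eqn-2}) to $m$ and $m'$ gives
$$\deg(m)-\deg(m')=\left(\frac{n}{m}-\frac{n}{m'}\right)+\left(\sum_{d\mid m}\phi\left(\frac{n}{d}\right)-\sum_{d\mid m'}\phi\left(\frac{n}{d}\right)\right)-\phi\left(\frac{n}{m}\right)+\phi\left(\frac{n}{m'}\right),$$
where the first parenthesis equals $-\frac{n}{m}(p_i-1)$.

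I would then evaluate the remaining terms. The divisors of $m$ that do not divide $m'$ are exactly the integers $p_i^{\beta_i}p_j^{e}$ with $0\leq e\leq\beta_j$, so the sum-difference equals $\phi\left(p_i^{\alpha_i-\beta_i}\right)\phi\left(p_k^{\alpha_k}\right)T_j$, where $T_j:=\sum_{e=0}^{\beta_j}\phi\left(p_j^{\alpha_j-e}\right)$ equals $p_j^{\alpha_j}$ if $\beta_j=\alpha_j$ and equals $p_j^{\alpha_j}-p_j^{\alpha_j-\beta_j-1}$ otherwise, by (\ref{eqn-3}). Since $n/m$ and $n/m'$ differ only in the exponent of $p_i$, one also gets $\phi(n/m')-\phi(n/m)=\phi\left(p_j^{\alpha_j-\beta_j}\right)\phi\left(p_k^{\alpha_k}\right)D$ with $D:=\phi\left(p_i^{\alpha_i-\beta_i+1}\right)-\phi\left(p_i^{\alpha_i-\beta_i}\right)\geq0$. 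Substituting $n/m=p_i^{\alpha_i-\beta_i}p_j^{\alpha_j-\beta_j}p_k^{\alpha_k}$ and $p_k^{\alpha_k}=\frac{p_k}{p_k-1}\phi\left(p_k^{\alpha_k}\right)$, and cancelling the positive factor $\phi\left(p_k^{\alpha_k}\right)$, reduces the lemma to
$$\phi\left(p_i^{\alpha_i-\beta_i}\right)T_j+\phi\left(p_j^{\alpha_j-\beta_j}\right)D>\frac{p_k}{p_k-1}\;p_i^{\alpha_i-\beta_i}p_j^{\alpha_j-\beta_j}(p_i-1).$$

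To finish, since $\phi\left(p_j^{\alpha_j-\beta_j}\right)D\geq0$ I would discard that summand and use the elementary bound $\phi\left(p_i^{\alpha_i-\beta_i}\right)\geq\frac{p_i-1}{p_i}\,p_i^{\alpha_i-\beta_i}$ (with the convention $\phi(1)=1$), which cuts the claim down to $T_j>\frac{p_k}{p_k-1}\,p_ip_j^{\alpha_j-\beta_j}$. Then split on the value of $T_j$. If $\beta_j=\alpha_j$, then $T_j=p_j^{\beta_j}\geq p_j^{2}$ while $p_j^{\alpha_j-\beta_j}=1$, so, using $\frac{p_k}{p_k-1}\leq2$ and $p_j>p_i\geq2$, we get $p_j^{\beta_j}\geq p_j^{2}>p_i^{2}\geq2p_i\geq\frac{p_k}{p_k-1}p_i$. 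If $\beta_j<\alpha_j$, then $T_j=p_j^{\alpha_j-\beta_j-1}\left(p_j^{\beta_j+1}-1\right)$ and the claim reduces to $p_j^{\beta_j+1}-1>\frac{p_k}{p_k-1}p_ip_j$; here $p_j^{\beta_j+1}\geq p_j^{3}$, while $p_j>p_i$ gives $p_j^{2}-2p_i\geq(p_j-1)^{2}+1\geq1$ and hence $p_j^{3}-2p_ip_j=p_j(p_j^{2}-2p_i)\geq p_j\geq2$, so $p_j^{\beta_j+1}-1\geq p_j^{3}-1>2p_ip_j\geq\frac{p_k}{p_k-1}p_ip_j$.

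I expect the main difficulty to be bookkeeping rather than any real obstruction: one must handle uniformly the degenerate exponents $\alpha_i=\beta_i$ and $\alpha_j=\beta_j$, where several of the $\phi$-factors above collapse to $\phi(1)=1$, and one must keep every inequality strict. Strictness is not provided by $\frac{p_k}{p_k-1}\leq2$ alone but is forced by $p_j>p_i$, which already gives $p_j^{2}>p_i^{2}$ in the first case and $p_j^{3}>2p_ip_j$ in the second. Finally, it is worth observing that $\beta_j\geq2$ is used precisely at the steps $p_j^{\beta_j}\geq p_j^{2}$ and $p_j^{\beta_j+1}\geq p_j^{3}$, and that it cannot be dispensed with: for $\beta_j=1$ the term $\phi\left(p_j^{\alpha_j-\beta_j}\right)D$ can no longer be discarded, since $\phi\left(p_i^{\alpha_i-\beta_i}\right)T_j$ may fall below the right-hand side.
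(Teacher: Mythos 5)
Your proof is correct and follows essentially the same route as the paper: expand $\deg\left(p_i^{\beta_i}p_j^{\beta_j}\right)-\deg\left(p_i^{\beta_i-1}p_j^{\beta_j}\right)$ via the degree formula (\ref{eqn-2}), evaluate the divisor-sum difference and the $\phi$-difference (your $T_j$ and $D$ are exactly the quantities the paper bounds), and reduce to a numerical inequality among the primes where $\beta_j\geq 2$ supplies the needed slack. The only deviation is in the endgame: you discard the $D$-term and close with the elementary estimates $\frac{p_k}{p_k-1}\leq 2$ and $p_j>p_i$, whereas the paper retains that term and invokes Lemma \ref{prime.ineq}(i); both finishes are valid.
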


\begin{proof}
Using the degree formula (\ref{eqn-2}), we have
\begin{align*}
\deg\left(p_{i}^{\beta_i}p_{j}^{\beta_j}\right) - \deg\left( p_{i}^{\beta_i-1}p_{j}^{\beta_j} \right) & = \frac{n}{p_i^{\beta_i}p_j^{\beta_j}}-\frac{n}{p_i^{\beta_i-1}p_j^{\beta_j}} + \sum_{\substack{d \big\vert p_i^{\beta_i}p_j^{\beta_j}}} \phi \left( \frac{n}{d}\right) - \sum_{\substack{d \big\vert p_i^{\beta_i-1}p_j^{\beta_j}}} \phi \left( \frac{n}{d}\right)\\
& \qquad + \phi\left(\frac{n}{p_i^{\beta_i-1}p_j^{\beta_j}} \right)  - \phi\left(\frac{n}{p_i^{\beta_i}p_j^{\beta_j}} \right).
\end{align*}
Let $\{k\}=\{1,2,3\}\setminus \{i,j\}$. Then
\begin{equation}\label{eqn-6}
\frac{n}{p_i^{\beta_i}p_j^{\beta_j}}-\frac{n}{p_i^{\beta_i-1}p_j^{\beta_j}}= - p_{i}^{\alpha_{i} - \beta_i} p_{j}^{\alpha_{j} - \beta_j} p_{k}^{\alpha_{k}} \phi(p_{i})
\end{equation}
and
\begin{equation}\label{eqn-7}
\sum_{\substack{d \big\vert p_i^{\beta_i}p_j^{\beta_j}}} \phi \left( \frac{n}{d}\right) - \sum_{\substack{d \big\vert p_i^{\beta_i-1}p_j^{\beta_j}}} \phi \left( \frac{n}{d}\right)
\geq \phi \left({p_{i}^{\alpha_{i} - \beta_i}} p_{k}^{\alpha_{k}}\right) \left( p_{j}^{\alpha_j} - p_{j}^{\alpha_j-\beta_j-1}  \right),
\end{equation}
where equality holds if and only if $\alpha_j > \beta_j$. We also have
\begin{equation}\label{eqn-8}
\phi\left(\frac{n}{p_i^{\beta_i-1}p_j^{\beta_j}} \right)  - \phi\left(\frac{n}{p_i^{\beta_i}p_j^{\beta_j}} \right)\geq \phi \left({p_{i}^{\alpha_{i} - \beta_i}} p_{k}^{\alpha_{k}}\right)  \phi\left(p_{j}^{\alpha_{j} - \beta_j}\right) ( \phi(p_{i}) -1 ),
\end{equation}
where equality holds if and only if $\alpha_i = \beta_i$. From (\ref{eqn-6}), (\ref{eqn-7}) and (\ref{eqn-8}), we get
\begin{align*}
&\deg\left(p_{i}^{\beta_i}p_{j}^{\beta_j}\right) - \deg\left( p_{i}^{\beta_i-1}p_{j}^{\beta_j} \right)\\
& \geq \phi \left({p_{i}^{\alpha_{i} - \beta_i}} p_{k}^{\alpha_{k}}\right) \left[\left( p_{j}^{\alpha_j} - p_{j}^{\alpha_j-\beta_j-1}  \right) +  \phi\left(p_{j}^{\alpha_{j} - \beta_j}\right) ( \phi(p_{i}) -1 ) \right]  - p_{i}^{\alpha_{i} - \beta_i} p_{j}^{\alpha_{j} - \beta_j} p_{k}^{\alpha_{k}} \phi(p_{i}) \\
& \geq p_{i}^{\alpha_{i} - \beta_i-1} p_{j}^{\alpha_{j} - \beta_j-1} p_{k}^{\alpha_{k}-1} \left[\phi\left( p_{i} p_{k} \right) \left[ \left( p_{j}^{\beta_j+1} -1 \right )  +  \phi\left(p_{j}\right) ( \phi(p_{i}) -1 ) \right] - p_{i} p_{j} p_{k} \phi(p_{i}) \right]  \\
& = p_{i}^{\alpha_{i} - \beta_i-1} p_{j}^{\alpha_{j} - \beta_j-1} p_{k}^{\alpha_{k}-1} \left[\phi\left( p_{i} p_{j} p_{k} \right) \left( p_{j}^{\beta_j} + \ldots + p_{j} + \phi(p_{i}) \right ) - p_{i} p_{j} p_{k} \phi(p_{i}) \right]  \\
& = p_{i}^{\alpha_{i} - \beta_i-1} p_{j}^{\alpha_{j} - \beta_j-1} p_{k}^{\alpha_{k}-1} \phi(p_{i}) \left[ \phi\left(p_{j} p_{k} \right) \left( p_{j}^{\beta_j} + \ldots + p_{j} + \phi(p_{i}) \right ) - p_{i} p_{j} p_{k}  \right].
\end{align*}
Since $\beta_j \geq 2$ and $j>i$ by the given hypotheses, we have $ p_{j}^{\beta_j}+\cdots + p_{j} + \phi(p_{i})  > 3 p_i$. So
$\phi\left(p_{j} p_{k} \right) \left( p_{j}^{\beta_j} + \ldots + p_{j} + \phi(p_{i}) \right ) - p_{i} p_{j} p_{k}> 3p_i\phi\left(p_{j} p_{k} \right)- p_{i} p_{j} p_{k}\geq 0,$
where the last inequality follows using Lemma \ref{prime.ineq}(i). Hence the lemma follows.
\end{proof}

\begin{lemma}\label{lem1}
Let $\{i,j,k\}=\{1,2,3\}$ with $i< j$. If $(p_{i}+ p_{j})\phi \left( p_{j}p_k \right) -  p_ip_{j}p_k>0$, then  $\deg\left(p_{i}^{\beta_i}p_{j}\right) > \deg\left(p_{j} \right)$ for $1\leq\beta_i\leq\alpha_i$.
\end{lemma}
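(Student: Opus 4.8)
The plan is to compute $\deg(p_i^{\beta_i}p_j)-\deg(p_j)$ directly from the degree formula (\ref{eqn-2}), following the template of the proofs of Lemma~\ref{lem.beta2} and Proposition~\ref{prop1}. Write $\{k\}=\{1,2,3\}\setminus\{i,j\}$. Since $1\le\beta_i\le\alpha_i$ and $\alpha_j\ge 1$, we have $\gcd(p_i^{\beta_i}p_j,n)=p_i^{\beta_i}p_j$ and $\gcd(p_j,n)=p_j$, so by the second form of (\ref{eqn-2}) the difference $\deg(p_i^{\beta_i}p_j)-\deg(p_j)$ is the sum of three pieces: the ``$n/b$'' piece $\frac{n}{p_i^{\beta_i}p_j}-\frac{n}{p_j}$, the ``divisor-sum'' piece $\sum_{d\mid p_i^{\beta_i}p_j}\phi(n/d)-\sum_{d\mid p_j}\phi(n/d)$, and the piece $\phi(n/p_j)-\phi(n/(p_i^{\beta_i}p_j))$. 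The first piece equals $-p_i^{\alpha_i-\beta_i}p_j^{\alpha_j-1}p_k^{\alpha_k}(p_i^{\beta_i}-1)$.

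For the divisor-sum piece I would note it equals $\sum_{d\mid p_i^{\beta_i}p_j,\ p_i\mid d}\phi(n/d)$; writing $d=p_ie$ with $e\mid p_i^{\beta_i-1}p_j$ and applying Lemma~\ref{phi.sum} with $x=n/p_i$ and $y=p_i^{\beta_i-1}p_j$, it factors over the three primes, the $p_k$-factor being exactly $\phi(p_k^{\alpha_k})$, the $p_i$-factor being $\sum_{a=1}^{\beta_i}\phi(p_i^{\alpha_i-a})$, and --- the key structural fact --- the $p_j$-factor being exactly $\phi(p_j^{\alpha_j})+\phi(p_j^{\alpha_j-1})$ whether or not $\alpha_j=1$. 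The third piece equals $\phi(p_j^{\alpha_j-1}p_k^{\alpha_k})\bigl(\phi(p_i^{\alpha_i})-\phi(p_i^{\alpha_i-\beta_i})\bigr)$.

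Adding the three pieces, in the generic case $\beta_i<\alpha_i$ and $\alpha_j\ge 2$ one uses the telescoping identity $\sum_{c=0}^{m}\phi(p_i^c)=p_i^m$, so that $\sum_{a=1}^{\beta_i}\phi(p_i^{\alpha_i-a})=p_i^{\alpha_i-1}-p_i^{\alpha_i-\beta_i-1}$, together with $p_i^{\alpha_i-\beta_i}(p_i^{\beta_i}-1)=p_i(p_i^{\alpha_i-1}-p_i^{\alpha_i-\beta_i-1})$ and $\phi(p_i^{\alpha_i})-\phi(p_i^{\alpha_i-\beta_i})=(p_i-1)(p_i^{\alpha_i-1}-p_i^{\alpha_i-\beta_i-1})$. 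Factoring out the common positive quantity $p_k^{\alpha_k-1}p_j^{\alpha_j-2}(p_i^{\alpha_i-1}-p_i^{\alpha_i-\beta_i-1})$, using $\phi(p_j^{\alpha_j})+\phi(p_j^{\alpha_j-1})=p_j^{\alpha_j-2}(p_j-1)(p_j+1)$ and $(p_j+1)+(p_i-1)=p_i+p_j$, the remaining bracket collapses to $(p_i+p_j)(p_j-1)(p_k-1)-p_ip_jp_k=(p_i+p_j)\phi(p_jp_k)-p_ip_jp_k$. Hence $\deg(p_i^{\beta_i}p_j)-\deg(p_j)$ is a positive multiple of $(p_i+p_j)\phi(p_jp_k)-p_ip_jp_k$, and the hypothesis makes it positive.

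The main (routine but fiddly) obstacle is the bookkeeping of two boundary situations, to be handled with the same ``$\ge$ with equality iff'' estimates as in the proof of Lemma~\ref{lem.beta2}. When $\beta_i=\alpha_i$, one has $\sum_{a=1}^{\beta_i}\phi(p_i^{\alpha_i-a})=p_i^{\alpha_i-1}$ and the first and third pieces each acquire an extra $-1$ relative to the generic formula; these combine with $p_j^{\alpha_j-1}p_k^{\alpha_k}-\phi(p_j^{\alpha_j-1}p_k^{\alpha_k})$ to contribute an additional strictly positive summand, which only helps the inequality. When $\alpha_j=1$, the same computation yields instead the quantity $(p_k-1)(p_i+p_j-1)-p_ip_k$; since $p_j\bigl[(p_k-1)(p_i+p_j-1)-p_ip_k\bigr]=\bigl[(p_i+p_j)\phi(p_jp_k)-p_ip_jp_k\bigr]+p_i\phi(p_k)$, this is again positive under the hypothesis (the case $\alpha_k=1$ needs no special treatment). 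Combining all cases gives $\deg(p_i^{\beta_i}p_j)>\deg(p_j)$, as required.
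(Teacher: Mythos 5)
Your proposal is correct and follows essentially the same route as the paper's proof: the same three-piece decomposition of $\deg(p_i^{\beta_i}p_j)-\deg(p_j)$ via the degree formula (\ref{eqn-2}), the exact evaluation of the divisor-sum piece as $\phi(p_k^{\alpha_k})\bigl(\sum_{a=1}^{\beta_i}\phi(p_i^{\alpha_i-a})\bigr)\bigl(\phi(p_j^{\alpha_j})+\phi(p_j^{\alpha_j-1})\bigr)$, and the collapse of the resulting bracket to $(p_i+p_j)\phi(p_jp_k)-p_ip_jp_k$. The only (immaterial) difference is that you treat the boundary situations $\beta_i=\alpha_i$ and $\alpha_j=1$ by explicit case analysis (the extra positive summand, and the identity $p_j\bigl[(p_k-1)(p_i+p_j-1)-p_ip_k\bigr]=\bigl[(p_i+p_j)\phi(p_jp_k)-p_ip_jp_k\bigr]+p_i\phi(p_k)$), whereas the paper absorbs them into a single chain of ``$\geq$'' estimates; both are sound.
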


\begin{proof}
Using the degree formula (\ref{eqn-2}), we get
\begin{align*}
& \deg\left(p_{i}^{\beta_i}p_{j}\right) - \deg\left(p_{j} \right) = \frac{n}{p_i^{\beta_i}p_j}-\frac{n}{p_j} + \sum_{\substack{d | p_i^{\beta_i}p_j}} \phi \left( \frac{n}{d}\right) - \sum_{\substack{d | p_j}} \phi \left( \frac{n}{d}\right) + \phi\left(\frac{n}{p_j} \right)  - \phi\left(\frac{n}{p_i^{\beta_i}p_j} \right).
\end{align*}
We have
\begin{equation}\label{eqn-9}
\frac{n}{p_i^{\beta_i}p_j}-\frac{n}{p_j}= -  p_{j}^{\alpha_{j} - 1}p_k^{\alpha_k}  \left(p_{i}^{\alpha_i} -p_{i}^{\alpha_{i} - \beta_i}\right)
\end{equation}
and
\begin{align}\label{eqn-10}
\sum_{\substack{d | p_i^{\beta_i}p_j}} \phi \left( \frac{n}{d}\right) - \sum_{\substack{d | p_j}} \phi \left( \frac{n}{d}\right)& =\phi \left(p_k^{\alpha_k}\right)  \left(\sum_{l=1}^{\beta_i} \phi \left(  {p_{i}^{\alpha_{i} - l}} \right) \right) \left(\sum_{l=0}^{1} \phi \left(  {p_{j}^{\alpha_{j} - l}} \right) \right )\nonumber\\
& \geq \phi \left(p_k^{\alpha_k}\right)  \left(\sum_{l=1}^{\beta_i} \phi \left(  {p_{i}^{\alpha_{i} - l}} \right) \right) \left ( p_{j}^{\alpha_{j}} - p_{j}^{\alpha_{j} - 2} \right ),
\end{align}
where equality holds if and only if $\alpha_j>1$. We also have
\begin{equation}\label{eqn-11}
\phi\left(\frac{n}{p_j} \right)  - \phi\left(\frac{n}{p_i^{\beta_i}p_j} \right)=\phi \left( p_{j}^{\alpha_{j} - 1}p_k^{\alpha_k} \right) \left(\phi\left(p_{i}^{\alpha_i}\right) - \phi\left(p_{i}^{\alpha_{i} - \beta_i}\right)\right).
\end{equation}
From (\ref{eqn-9}), (\ref{eqn-10}) and (\ref{eqn-11}), we get
\begin{align*}
\deg\left(p_{i}^{\beta_i}p_{j}\right) - \deg\left(p_{j} \right) & \geq \phi \left(p_k^{\alpha_k}\right) \left(\sum_{l=1}^{\beta_i} \phi \left(  {p_{i}^{\alpha_{i} - l}} \right) \right) \left ( p_{j}^{\alpha_{j}} - p_{j}^{\alpha_{j} - 2} \right )\\
& \quad + \phi \left( p_{j}^{\alpha_{j} - 1}p_k^{\alpha_k} \right) \left(\phi\left(p_{i}^{\alpha_i}\right) - \phi\left(p_{i}^{\alpha_{i} - \beta_i}\right)\right)  -  p_{j}^{\alpha_{j} - 1}p_k^{\alpha_k}  \left(p_{i}^{\alpha_i} -p_{i}^{\alpha_{i} - \beta_i}\right)  \\
& \geq p_{j}^{\alpha_{j} - 2} p_k^{\alpha_k-1} \Bigg[ \left ( p_{j}^{2} - 1 \right )\phi \left(p_k\right)  \left(\sum_{l=1}^{\beta_i} \phi \left(  {p_{i}^{\alpha_{i} - l}} \right) \right)\\
& \qquad\qquad + \phi \left( p_{j}p_k \right) \left(\phi\left(p_{i}^{\alpha_i}\right) - \phi\left(p_{i}^{\alpha_{i} - \beta_i}\right)\right)  -  p_{j}p_k  \left(p_{i}^{\alpha_i} -p_{i}^{\alpha_{i} - \beta_i}\right) \Bigg]\\
& = p_{j}^{\alpha_{j} - 2} p_k^{\alpha_k-1} \Bigg[  \phi \left( p_{j}p_k \right) \Bigg[\left ( p_{j}+ 1 \right ) \left(\sum_{l=1}^{\beta_i} \phi \left(  {p_{i}^{\alpha_{i} - l}} \right) \right) \\
& \qquad\qquad\qquad\qquad + \phi\left(p_{i}^{\alpha_i}\right) - \phi\left(p_{i}^{\alpha_{i} - \beta_i}\right) \Bigg] -  p_{j}p_k  \left(p_{i}^{\alpha_i} -p_{i}^{\alpha_{i} - \beta_i}\right)\Bigg]\\
& \geq p_{j}^{\alpha_{j} - 2} p_k^{\alpha_k-1} \bigg[ \phi \left( p_{j}p_k \right) \left[ p_{j} \left ( p_{i}^{\alpha_i-1} -p_{i}^{\alpha_{i} - \beta_i-1} \right )  + p_{i}^{\alpha_i} -p_{i}^{\alpha_{i} - \beta_i} \right]\\
& \qquad\qquad\qquad\qquad\qquad\qquad\qquad\qquad\qquad - p_{j}p_k  \left(p_{i}^{\alpha_i} -p_{i}^{\alpha_{i} - \beta_i}\right)\bigg]\\
& = p_{j}^{\alpha_{j} - 2} p_k^{\alpha_k-1} \left ( p_{i}^{\alpha_i-1} -p_{i}^{\alpha_{i} - \beta_i-1} \right ) \left[(p_{i}   + p_{j}) \phi \left( p_{j}p_k \right) -  p_ip_{j}p_k\right].
\end{align*}
Since $(p_{i}+ p_{j})\phi \left( p_{j}p_k \right) -  p_ip_{j}p_k>0$, it follows from the above that $\deg\left(p_{i}^{\beta_i}p_{j}\right) > \deg\left(p_{j} \right)$.
\end{proof}

\begin{proof}[{\bf Proof of Theorem \ref{mindeg.3prime}}]
If $p_1 \geq 4$, then $2\phi(p_1 p_2 p_3) \geq p_1 p_2 p_3$ by Lemma \ref{prime.ineq}(ii). So $\delta(\mathcal{P}(C_n)) = \min\{\deg\left({p_2^{\alpha_2}}\right),\deg\left({p_3^{\alpha_3}}\right)\}$ by Theorem \ref{thm.mindeg}. Now assume that $p_1 = 2$ or $3$. In view of Proposition \ref{degcompare}(i), (ii) and (iv), the minimum degree of $\mathcal{P}(C_n)$ can be attained at the vertex $p_{2}^{\alpha_2}$ or $p_{3}^{\alpha_3}$, or at a vertex of the form $p_{i}^{\beta_i}p_{j}^{\beta_j}$ for some $i,j\in\{1,2,3\}$ with $i<j$, where $1\leq \beta_i \leq \alpha_i$ and $1\leq \beta_j \leq \alpha_j$.

Consider the vertices of the form $p_{i}^{\beta_i}p_{j}^{\beta_j}$ with $i<j$. We show that $\deg\left(p_{i}^{\beta_i}p_{j}^{\beta_j}\right) > \deg\left(p_{j}^{\beta_j} \right)$. Then Proposition \ref{degcompare}(ii) implies that $\deg\left(p_{j}^{\beta_j}\right)\geq \deg\left( p_j^{\alpha_j}\right)$ and this would complete the proof.

If $\beta_j\geq 2$, then applying Lemma \ref{lem.beta2} repeatedly we find that $\deg\left(p_{i}^{\beta_i}p_{j}^{\beta_j}\right)>\deg\left(p_j^{\beta_j}\right)$.
Suppose that $\beta_j=1$. Let $\{k\}=\{1,2,3\}\setminus\{i,j\}$. We show that
\begin{equation}\label{eqn-12}
(p_{i}+ p_{j})\phi \left( p_{j}p_k \right) -  p_ip_{j}p_k>0.
\end{equation}
Then Lemma \ref{lem1} implies that $\deg\left(p_{i}^{\beta_i}p_{j}\right) > \deg\left(p_{j} \right)$. Clearly, (\ref{eqn-12}) holds using Lemma \ref{prime.ineq}(i) if $p_{j} > 2p_i$. Since $p_j\neq 2p_i$, assume that $p_{j} < 2 p_i$. We have the following two cases.
\begin{enumerate}[$\bullet$]
\item $i=1$: Since $p_1\in\{2,3\}$ and $p_{j} < 2 p_1$, we have $j=2$ and $(p_1,p_2)=(2,3)$ or $(3,5)$.
If $(p_1, p_2) = (2,3)$, then $\left(p_{1} + p_{2}\right)\phi\left(p_{2}p_3\right) - p_{1} p_{2} p_3 = 10 \phi \left(p_3\right)- 6 p_3 = 4 p_3 -10 > 0$ as $p_3\geq 5$. If $(p_1, p_2)=(3,5)$, then $p_3\geq 7$ and $\left(p_{1} + p_{2}\right)\phi\left(p_{2}p_3\right)-  p_{1} p_{2} p_3 = 32 \phi\left(p_3\right)- 15 p_3 = 17p_3 -32 > 0$.

\item $(i,j)=(2,3)$: Here $k=1$ and $p_3\geq p_2 + 2$. If $p_k=p_1=2$, then $\left ( p_{2} + p_{3} \right)\phi \left(p_1p_{3}\right) -  p_{1} p_{2} p_3 = p_3^2 - \left ( p_{2} + p_{3} \right ) - p_{2} p_3 =p_3( p_3- p_{2} -1)-p_2 > 0$. If $p_k=p_1=3$, then $\left(p_{2}+ p_{3}\right)\phi \left(p_1p_{3}\right) -  p_{1} p_{2} p_3 = 2p_3^2 - 2\left ( p_{2} + p_{3} \right )- p_{2} p_3 =p_3( 2p_3- p_{2} -2)-2p_2 > 0.$
\end{enumerate}	
This completes the proof.
\end{proof}	

\begin{corollary}\label{coro}
If $p_3 \geq 2p_2+1$, then $\delta(\mathcal{P}(C_n)) = \deg\left(p_3^{\alpha_3}\right)$.
\end{corollary}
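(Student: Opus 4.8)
The plan is to combine Theorem \ref{mindeg.3prime} with the strict inequality (\ref{eqn-5}). By Theorem \ref{mindeg.3prime} we already know that $\delta(\mathcal{P}(C_n))=\min\{\deg(p_2^{\alpha_2}),\deg(p_3^{\alpha_3})\}$, so it suffices to prove that $\deg(p_2^{\alpha_2})>\deg(p_3^{\alpha_3})$ under the hypothesis $p_3\geq 2p_2+1$; the conclusion $\delta(\mathcal{P}(C_n))=\deg(p_3^{\alpha_3})$ is then immediate.

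To get this inequality I would apply the lemma containing (\ref{eqn-5}) with $r=3$ and $i=2$. Since $\dfrac{n}{p_2^{\alpha_2}p_3^{\alpha_3}}=p_1^{\alpha_1}$ and $\dfrac{n}{p_2^{\alpha_2-1}p_3^{\alpha_3}}=p_1^{\alpha_1}p_2$, inequality (\ref{eqn-5}) specializes to
$$\deg(p_2^{\alpha_2})-\deg(p_3^{\alpha_3})>p_2^{\alpha_2-1}\bigl[(p_3-1)\phi(p_1^{\alpha_1})-p_1^{\alpha_1}p_2\bigr]=p_2^{\alpha_2-1}p_1^{\alpha_1-1}\bigl[(p_3-1)(p_1-1)-p_1p_2\bigr].$$
The remaining step is elementary: from $p_3\geq 2p_2+1$ we have $p_3-1\geq 2p_2$, and since $p_1\geq 2$ we have $2(p_1-1)\geq p_1$; hence $(p_3-1)(p_1-1)\geq 2p_2(p_1-1)\geq p_1p_2$, so the bracketed factor is nonnegative and therefore $\deg(p_2^{\alpha_2})-\deg(p_3^{\alpha_3})>0$. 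This completes the argument.

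I do not expect any genuine obstacle here; the only points that need a little care are to invoke (\ref{eqn-5}) rather than expanding $\deg(p_2^{\alpha_2})$ and $\deg(p_3^{\alpha_3})$ separately through the degree formula (\ref{eqn-2}), and to observe that the hypothesis $p_3\geq 2p_2+1$ is precisely what forces $(p_3-1)(p_1-1)-p_1p_2\geq 0$ for every admissible $p_1\geq 2$ (with the extreme case $p_1=2$ giving equality in the middle step). It is worth noting that this refines Corollary \ref{exact.mindeg} in the case $r=3$, where the latter would require the stronger condition $p_3>3p_2$.
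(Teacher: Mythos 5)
Your proposal is correct and is essentially the paper's own proof: both apply the lemma containing (\ref{eqn-5}) with $i=2$, $r=3$, reduce the bracket to $p_1^{\alpha_1-1}\left[(p_3-1)(p_1-1)-p_1p_2\right]$ (the paper writes it as $p_2p_1^{\alpha_1-1}\left(2\phi(p_1)-p_1\right)$ after using $p_3-1\geq 2p_2$), observe it is nonnegative since $2\phi(p_1)\geq p_1$, and conclude via Theorem \ref{mindeg.3prime}. No gaps.
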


\begin{proof}
By (\ref{eqn-5}), we have $\deg\left({p_2^{\alpha_2}}\right) - \deg\left({p_3^{\alpha_3}}\right) > p_2^{\alpha_2-1} \left[(p_3-1)\phi \left(p_1^{\alpha_1}\right) - p_2p_1^{\alpha_1}\right]$.
Since $p_3\geq 2p_2+1$, it follows that $\deg\left({p_2^{\alpha_2}}\right) - \deg\left({p_3^{\alpha_3}}\right)>p_1^{\alpha_1-1}p_2^{\alpha_2} \left (2\phi(p_1) - p_1\right) \geq 0$ and so $\delta(\mathcal{P}(C_n)) = \deg\left(p_3^{\alpha_3}\right)$ by Theorem \ref{mindeg.3prime}.
\end{proof}

\begin{example}
Take $n= 2 \cdot  3^3 \cdot 5$. Then $\delta(\mathcal{P}(C_n))=\deg(3^3)=113 < 125 = \deg(5)$. It follows that if $p_3 < 2p_2+1$, then $\delta(\mathcal{P}(C_n)) = \deg\left(p_2^{\alpha_2}\right)< \deg\left(p_3^{\alpha_3}\right)$ may occur.
\end{example}

\vskip .3cm

\noindent\underline{\bf Addresses}:\\
{\bf Ramesh Prasad Panda, Kamal Lochan Patra, Binod Kumar Sahoo}
\begin{enumerate}
\item[1)] School of Mathematical Sciences, National Institute of Science Education and Research (NISER), Bhubaneswar, P.O.- Jatni, District- Khurda, Odisha - 752050, India.

\item[2)] Homi Bhabha National Institute (HBNI), Training School Complex, Anushakti Nagar, Mumbai - 400094, India.
\end{enumerate}

\noindent{\bf Emails}: rppanda@niser.ac.in, klpatra@niser.ac.in, bksahoo@niser.ac.in
\end{document}